\documentclass[11pt,a4paper]{article}

\usepackage[T1]{fontenc}
\usepackage[utf8]{inputenc}
\usepackage[english]{babel}
\usepackage{amsmath,amssymb,amsthm,mathtools,comment}
\usepackage{booktabs,tabularx,array}
\usepackage{enumitem}
\usepackage{microtype}
\usepackage{xcolor}
\usepackage{graphicx}
\usepackage{adjustbox}
\usepackage{float}
\usepackage{pdflscape}
\usepackage[a4paper,margin=28mm,headheight=14pt]{geometry}
\usepackage[numbers,sort&compress]{natbib}
\usepackage{hyperref}
\usepackage[capitalise,noabbrev]{cleveref}
\usepackage{tikz}
\usepackage{tikz-cd}
\usetikzlibrary{arrows.meta,positioning,fit,backgrounds,calc}

\hypersetup{
  colorlinks=true,
  linkcolor=blue!45!black,
  citecolor=green!35!black,
  urlcolor=blue!55!black,
  pdftitle={On Weak Set Theories Interpreted in PA},
  pdfauthor={Junhong Chen}
}

\setlist{itemsep=2pt,topsep=5pt}
\allowdisplaybreaks

\newcommand{\N}{\mathbb N}

\newcommand{\Seq}{\operatorname{Seq}}

\newcommand{\Q}{\mathsf Q}
\newcommand{\PA}{\mathsf{PA}}
\newcommand{\PAminus}{\mathsf{PA}^{-}}
\newcommand{\IDelta}{I\Delta_{0}}
\newcommand{\IS}[1]{I\Sigma_{#1}}
\newcommand{\BS}[1]{B\Sigma_{#1}}
\newcommand{\expax}{\mathsf{exp}}
\newcommand{\OmegaOne}{\Omega_{1}}

\newcommand{\Szero}{\mathsf S_{0}}
\newcommand{\AD}{\mathsf{AD}}
\newcommand{\ReSzero}{\mathsf{ReS}_{0}}
\newcommand{\DB}{\mathsf{DB}_{0}}
\newcommand{\GJ}{\mathsf{GJ}_{0}}
\newcommand{\Tzero}{\mathsf T_{0}}

\newcommand{\Coll}[1]{\mathsf{Coll}(#1)}
\newcommand{\SColl}[1]{\mathsf{Coll}_{s}(#1)}
\newcommand{\Sep}[1]{\mathsf{Sep}(#1)}
\newcommand{\Repl}[1]{\mathsf{Repl}(#1)}
\newcommand{\Fnd}[1]{#1\text{-}\mathsf{Foundation}}
\newcommand{\SetFnd}{\mathsf{SetFoundation}}
\newcommand{\Pow}{\mathsf{PowerSet}}
\newcommand{\Inf}{\mathsf I}
\newcommand{\TCl}{\mathsf{TCl}}

\newcommand{\Int}{\mathrel{\triangleright}}
\newcommand{\Mint}{\mathrel{\equiv_{\mathrm{int}}}}
\newcommand{\Dedeq}{\mathrel{\equiv_{\mathrm{ded}}}}

\newcolumntype{Y}{>{\raggedright\arraybackslash}X}

\theoremstyle{plain}
\newtheorem{theorem}{Theorem}[section]
\newtheorem{lemma}[theorem]{Lemma}

\theoremstyle{definition}
\newtheorem{definition}[theorem]{Definition}

\theoremstyle{remark}

\tikzset{
  theory/.style={
    draw=black!65,
    fill=white,
    rounded corners=1pt,
    align=center,
    inner xsep=4pt,
    inner ysep=3pt,
    font=\scriptsize
  },
  ded/.style={-{Latex[length=1.8mm]},semithick,black!80},
  same/.style={densely dashed,black!55,semithick},
  degreebox/.style={
    draw=black!45,
    fill=black!2,
    rounded corners=2pt,
    inner sep=6pt
  }
}

\title{On Weak Set Theories Interpreted in \(\mathsf{PA}\)}
\author{Junhong Chen}
\date{August 2026}

\begin{document}

\maketitle

\begin{abstract}
We classify a broad family of weak first-order set theories, under ordinary parameter-free interpretability, by the first-order arithmetical theories with which they are mutually interpretable. This also determines their consistency strength. Set theories that correspond to the same arithmetical theory are often related by deductive extension. It is therefore enough to interpret a stronger set theory in the arithmetical theory and to recover the arithmetical theory in a weaker set theory; all intermediate cases then follow. The set theories under consideration fall roughly into three classes: theories with neither Power Set nor Infinity, theories with Power Set but without Infinity, and theories with Infinity but without Power Set. We conclude with a brief account of the higher levels that remain to be investigated.
\end{abstract}

\noindent\textbf{Keywords:}
weak set theory; interpretability; Peano arithmetic; Foundation; Collection; Replacement; Power Set

\section{Introduction}

G\"odel's discussion of the continuum problem placed the study of set-theoretic axioms at the centre of foundational research \citep{Godel1947}. Weak set theories and their basic properties have since been studied systematically; see, for example, \citep{Mathias2001,Mathias2007}. This paper makes an exact comparison between set theory and the weak end of arithmetic. Given a weak set theory \(T\), how much arithmetic is needed to define a model of \(T\)? Conversely, how much arithmetic can be recovered inside \(T\)?

In keeping with this aim, we use ordinary first-order interpretability. The interpretations of finite set theory developed by \citet{KayeWong2007,Pettigrew2009} are important precedents for this form of comparison. Ordinary interpretability is strictly stronger than equiconsistency. An external model construction, a forcing extension, or a model obtained from a generic parameter does not by itself provide a fixed family of parameter-free interpreting formulas. At the same time, interpretability is of course coarser than deductive extension. This observation determines the organisation of our proofs: once interpretability has been established at the strongest and weakest endpoints, all theories between them can be placed in the same mutual-interpretability class at once.

Proof-theoretic ordinals provide another useful measure, but they are not automatically preserved by ordinary interpretations. An interpretation may replace the standard natural numbers by a definable initial segment or a quotient, and it may change formula complexity. Transferring an ordinal analysis across an interpretation therefore requires an additional preservation theorem, followed by a separate verification that each of our interpretations meets its hypotheses. Moreover, the systems considered here are far weaker than the theories for which ordinal analysis is usually informative, so merely identifying their proof-theoretic ordinals would add little to the present classification. We do not pursue that question here.

\section{Preliminaries}

\begin{definition}
Let \(S\) and \(T\) be first-order theories. We write \(T\Int S\) if a fixed finite family of parameter-free formulas in the language of \(T\) defines, in every model of \(T\), a nonempty quotient structure satisfying \(S\). These formulas specify the domain, an equivalence relation on that domain, and the interpretations of the relations and functions of \(S\). If both \(T\Int S\) and \(S\Int T\), we write \(T\Mint S\).
\end{definition}

Thus an interpretation arrow points from the interpreting theory to the interpreted theory. In the deductive diagrams below, however, a solid arrow \(U\longrightarrow V\) means \(U\vdash V\), and hence points from the stronger theory to the weaker theory in the same language.

For set-theoretic formulas we use the L\'evy hierarchy. A \(\Delta_0\) formula contains only quantifiers of the forms \(\forall x\in a\) and \(\exists x\in a\), while \(\Sigma_n\) and \(\Pi_n\) are obtained by strict alternation of unbounded quantifier blocks in prenex form. This is the convention used for the relevant comparisons of Collection and Replacement in \citep{Mathias2007}. We use the standard arithmetical hierarchy in the language of arithmetic.

Fix a formula \(\varphi(x,y,\bar p)\), where \(x\) is the input, \(y\) is a witness, and \(\bar p\) is a tuple of parameters; let \(a\) be the set over which witnesses are to be collected. The corresponding instance \(\Coll{\varphi}\) of Collection is
\begin{equation}
  \forall x\in a\,\exists y\,\varphi(x,y,\bar p)\longrightarrow
  \exists b\,\forall x\in a\,\exists y\in b\,\varphi(x,y,\bar p),
\label{eq:ordinary-collection}
\end{equation}
whereas the corresponding instance \(\SColl{\varphi}\) of Strong Collection is
\begin{equation}
  \exists b\,\forall x\in a\,
  \bigl(\exists y\,\varphi(x,y,\bar p)\longleftrightarrow
  \exists y\in b\,\varphi(x,y,\bar p)\bigr).
\label{eq:strong-collection}
\end{equation}
The corresponding instance \(\Repl{\varphi}\) of Replacement is
\begin{equation}
  \forall x\in a\,\exists! y\,\varphi(x,y,\bar p)\longrightarrow
  \exists b\,\forall y\,
  \bigl(y\in b\longleftrightarrow\exists x\in a\,\varphi(x,y,\bar p)\bigr).
\end{equation}
Here \(b\) is the exact range of the function on \(a\). For a formula \(A(x,\bar p)\), where \(x\) is a candidate minimal element and \(\bar p\) is a tuple of parameters, the corresponding instance \(\Fnd{A}\) of Foundation is
\begin{equation}
  \exists x\,A(x,\bar p)\longrightarrow
  \exists x\bigl(A(x,\bar p)\wedge\forall y\in x\,\neg A(y,\bar p)\bigr).
\label{eq:foundation}
\end{equation}
Writing a formula class \(\Gamma\) in parentheses denotes the scheme containing the corresponding instance for every formula in \(\Gamma\).

The following facts are standard.
\begin{lemma}[Implications between schemes]
\label{lem:scheme-implications}
Over \(\ReSzero\):
\begin{enumerate}[label=\textup{(\roman*)}]
\item \(\SColl{\Delta_0}\Dedeq\Coll{\Delta_0}+\Sep{\Sigma_1};\)
\item for every \(n\geq1\), \(\SColl{\Sigma_n}\Dedeq\Coll{\Sigma_n}+\Sep{\Sigma_n};\)
\item for \(n\geq1\), \(\Coll{\Sigma_{n+1}}\) implies \(\SColl{\Sigma_n}\);
\item for \(n\geq1\), \(\SColl{\Sigma_n}\) implies \(\Repl{\Sigma_n}\);
\item for \(n\geq1\), \(\Repl{\Sigma_{n+1}}\) implies \(\SColl{\Sigma_n}\).
\end{enumerate}
\end{lemma}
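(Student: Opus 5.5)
The plan is to work in \(\ReSzero\) and use only its basic set-forming operations: pairing, union, and \(\Delta_0\)-separation, which \(\ReSzero\) provides or which are routinely available in it. The main tools are a few closure facts for the Lévy hierarchy over such a base. The central one is that bounded quantifiers in front of \(\Sigma_n\) formulas can be absorbed once \(\Coll{\Sigma_n}\) is available. I will prove (ii) and (i) first, since they are the structural equivalences. Items (iii)–(v) then follow by combining Collection, Separation and the absorption fact.

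For (ii), I first argue from left to right. Assume \(\SColl{\Sigma_n}\).

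\begin{itemize}
\item \(\Coll{\Sigma_n}\) is immediate. If \(\forall x\in a\,\exists y\,\varphi\) holds, the set \(b\) given by Strong Collection already witnesses every \(x\in a\) inside \(b\).
\item For \(\Sep{\Sigma_n}\), write a \(\Sigma_n\) formula as \(\exists y\,\psi(x,y)\) with \(\psi\in\Pi_{n-1}\). Apply \(\SColl{\Pi_{n-1}}\) (a subclass of \(\SColl{\Sigma_n}\)) to get \(b\). Then \(\{x\in a:\exists y\in b\,\psi\}\) is the required set.
\item That last set must still be formed. When \(n=1\), \(\psi\) is \(\Delta_0\), so \(\Delta_0\)-separation suffices. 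In general I proceed by induction on \(n\), using \(\Sep{\Sigma_{n-1}}\) and \(\Pi_{n-1}\)-separation (by complementation) to form sets defined by \(\exists y\in b\,\psi\). Here the absorption of the bounded quantifier uses \(\Coll{\Sigma_{n-1}}\), which is available from the left side.
\end{itemize}

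For right to left, assume \(\Coll{\Sigma_n}+\Sep{\Sigma_n}\). Use \(\Sep{\Sigma_n}\) to form \(a'=\{x\in a:\exists y\,\varphi\}\). Apply \(\Coll{\Sigma_n}\) on \(a'\) to get \(b\). Then \(b\) satisfies the biconditional, because the right-to-left direction of it is trivial.

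Item (i) is the same argument with \(n=0\). The only difference is that Separation for \(\{x\in a:\exists y\,\varphi\}\) with \(\varphi\in\Delta_0\) is \(\Sep{\Sigma_1}\).

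For (iii), I derive \(\Sep{\Sigma_n}\) from \(\Coll{\Sigma_{n+1}}\) and then invoke (ii). I expect this to be the main obstacle, because Separation must be obtained from Collection alone. The approach is as follows.

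\begin{itemize}
\item Given \(\exists y\,\psi(x,y)\) with \(\psi\in\Pi_{n-1}\), consider the \(\Sigma_{n+1}\)-type statement \(\forall x\in a\,\exists y\,\bigl(\psi(x,y)\vee\forall z\,\neg\psi(x,z)\bigr)\). Its matrix is a disjunction of a \(\Pi_{n-1}\) formula and a \(\Pi_n\) formula, hence \(\Pi_n\) and therefore \(\Sigma_{n+1}\).
\item This statement is true by excluded middle, so \(\Coll{\Sigma_{n+1}}\) yields \(b\) with \(\forall x\in a\,\exists y\in b\,(\dots)\).
\item Hence \(\exists y\,\psi\leftrightarrow\exists y\in b\,\psi\) for all \(x\in a\). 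This is exactly \(\SColl{}\) for \(\psi\), and \(\SColl{\Sigma_n}\) in general follows by contracting the outer quantifier blocks.
\item The subtle point is again forming the separated set. I plan to handle it by induction on \(n\), absorbing the bounded quantifier using \(\Coll{\Sigma_n}\subseteq\Coll{\Sigma_{n+1}}\). If needed, I will fall back on the standard argument from \citep{Mathias2007}.
\end{itemize}

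For (iv), assume \(\forall x\in a\,\exists!y\,\varphi\) with \(\varphi\in\Sigma_n\). Strong Collection gives \(b\) containing all values. The range is then \(\{y\in b:\exists x\in a\,\varphi\}\). This set exists by \(\Sep{\Sigma_n}\), which comes from (ii), once the bounded quantifier \(\exists x\in a\) is absorbed into \(\Sigma_n\) using \(\Coll{\Sigma_n}\).

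For (v), take \(\psi\in\Pi_{n-1}\) and \(x\in a\). Define the function \(F(x)\) to be the set \(\{y\in c:\psi(x,y)\}\) for a suitable \(c\), or equivalently the \(\in\)-least or rank-least witness. The cleanest choice is \(F(x)=\{\langle x,0\rangle\}\) if \(\exists y\,\psi\) holds and \(\{\langle x,1\rangle\}\) otherwise. This functional relation is a Boolean combination of \(\Sigma_n\) formulas, hence \(\Sigma_{n+1}\). \(\Repl{\Sigma_{n+1}}\) then yields \(\Sep{\Sigma_n}\) by reading off the range. Collection \(\Coll{\Sigma_n}\) is obtained similarly: replace each \(x\) by a set of witnesses of minimal rank, which is \(\Sigma_{n+1}\)-definable given the available rank function, or else follow the classical derivation in \citep{Mathias2007}. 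Item (ii) then finishes the argument.
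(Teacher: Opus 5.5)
\textbf{Items (i)--(iv) are fine; the gap is the Collection half of (v), and that half is in fact not provable over \(\ReSzero\).}

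Your arguments for (i)--(iv) are the standard ones, which the paper does not prove but cites from Mathias and McKenzie. They go through, with two small simplifications available:
\begin{itemize}
\item In (iii), applying \(\Coll{\Sigma_{n+1}}\) to \(\varphi\vee\forall z\,\neg\varphi\) with \(\varphi\in\Sigma_n\) gives Strong Collection directly. No separated set has to be formed there.
\item In (iv), \(\exists x\in a\,\varphi\) is already \(\Sigma_n\), being \(\exists x\,(x\in a\wedge\varphi)\). No absorption via Collection is needed.
\end{itemize}
In (v), your two-valued function \(x\mapsto\{\langle x,0\rangle\}\) or \(\{\langle x,1\rangle\}\) correctly yields \(\Repl{\Sigma_{n+1}}\vdash\Sep{\Sigma_n}\).

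The gap is the Collection half of (v). Replacing each \(x\) by its witnesses of minimal rank is Scott's trick. It needs Foundation, a rank function, and above all \(\Pow\), so that the witnesses of a given rank form a set. \(\ReSzero\) has none of these, and deferring to Mathias does not supply them.

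No other argument can close this gap, because this half of (v) is false over \(\ReSzero\). Zarach (1996) and Gitman--Hamkins--Johnstone (2016) give models of \(\mathsf{ZFC}\) without Power Set, with the full Replacement scheme, in which \(\omega_1\) is singular. In such a model:
\begin{itemize}
\item Fix a cofinal \(s\colon\omega\to\omega_1\), taking every \(s(n)>0\).
\item Each \(s(n)\) is countable, so \(\forall n\in\omega\,\exists f\,(f\text{ maps }\omega\text{ onto }s(n))\) holds, with a \(\Delta_0\) matrix in the parameters \(s,\omega\).
\item Suppose a set \(B\) collected such \(f\). Well-order \(B\) and pick the least suitable \(f\) for each \(n\), by \(\Delta_0\)-Separation on \(\omega\times B\). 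This gives a surjection from \(\omega\times\omega\) onto \(\omega_1\), which makes \(\omega_1\) countable.
\end{itemize}
So \(\Coll{\Delta_0}\) fails in a model of \(\ReSzero+\Repl{\Sigma_{n+1}}\) for every \(n\). Hence \(\Repl{\Sigma_{n+1}}\) does not imply \(\Coll{\Sigma_n}\), let alone \(\SColl{\Sigma_n}\), for any \(n\geq1\).

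Only the Separation half of (v) is provable over this base. The Collection half needs a stronger base theory, so (v) as stated cannot be established by you or by the cited sources over \(\ReSzero\).
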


Proofs of these implications may be found in \citet{Mathias2007} and \citet{McKenzie2019}. We shall use only the directions displayed above.

All our set theories are formulated in the first-order language \(\{\in\}\).
\begin{definition}
The theory \(\Szero\) consists of Extensionality, Empty Set, Pairing, Set Difference, and Union. Let
\[
 \ReSzero=\Szero+\Sep{\Delta_0},
\]
which is well known to be finitely axiomatizable. Define \(\DB\) by
\[
 \DB\Dedeq\ReSzero+\mathsf{CartesianProduct}.
\]
The theory \(\GJ\) further includes the elementary image-of-sections operation
\[
 R_8(x,y)=\{x^{\prime\prime}\{w\}:w\in y\},
 \qquad x^{\prime\prime}\{w\}=\{u:\langle u,w\rangle\in x\}.
\]
Finally, let
\[
 \Tzero=\GJ+\SetFnd+\TCl.
\]
Here \(\TCl\) asserts that every set is contained in a least transitive set. Clearly \(\ReSzero\) proves \(\SetFnd\leftrightarrow\Fnd{\Delta_0}\), and we shall use the two notations interchangeably. We write \(\Inf\) for Strong Infinity, which asserts that the class of finite von Neumann ordinals is a set \(\omega\); it is well known that both \(x=\omega\) and \(x\in\omega\) are \(\Delta_0\). We write \(\Pow\) for Power Set.
\end{definition}

The operation \(R_8\) collects the sections of one fixed elementary relation. It is not equivalent to an arbitrary instance of Replacement or Collection; the deductive distinctions between these schemes are analysed in \citep{Mathias2007}.

We also use standard notation for arithmetical theories.

\begin{theorem}[The arithmetic spine]
With respect to ordinary interpretability,
\begin{align}
\Q&\Mint\PAminus
\Mint\IDelta+\OmegaOne
\Mint\BS1+\OmegaOne,                                      \label{eq:q-spine}\\
\IDelta+\expax&\Mint\BS1+\expax,                         \label{eq:exp-spine}\\
\BS{m}&\Mint I\Sigma_{m-1}\qquad(m\geq1),                \label{eq:ind-spine}
\end{align}
where \(I\Sigma_0\) in \eqref{eq:ind-spine} means \(I\Delta_0\). The degrees represented by
\[
 \Q,\quad\IDelta+\expax,\quad
 I\Sigma_1,\quad I\Sigma_2,\ldots,\quad\PA
\]
are strictly increasing.
\end{theorem}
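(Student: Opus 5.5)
This theorem assembles known results from the literature on weak arithmetic, so the plan is to reduce each equivalence to a standard fact. For each chain of \(\Mint\), I will use a weakest theory \(W\) and a strongest theory \(S\) with \(S\vdash\cdots\vdash W\). It then suffices to show \(W\Int S\): every intermediate theory is interpreted by identity in anything deductively stronger, so the whole chain collapses.

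For \eqref{eq:q-spine}, I take \(W=\Q\) and \(S=\BS1+\OmegaOne\). The route is Solovay's shortening of definable cuts, as developed by Wilkie and Paris and by Nelson. In \(\Q\) one defines a cut closed under successor, addition and multiplication on which \(\PAminus\) holds. Shortening it repeatedly produces a cut on which \(\IDelta\) holds and which is closed under \(x\mapsto x^{\log x}\), so \(\OmegaOne\) holds as well. To get \(\BS1\), I invoke the Wilkie--Paris theorem that \(\IDelta+\OmegaOne\) interprets \(\BS1+\OmegaOne\): one passes to a further cut on which \(\Delta_0\) collection holds, and this is where totality of \(\OmegaOne\) is used. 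The intermediate theory \(\PAminus\) lies between \(\Q\) and \(\IDelta+\OmegaOne\), so it falls into place automatically.

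For \eqref{eq:exp-spine}, the same Wilkie--Paris argument works relativised to exponentiation: \(\IDelta+\expax\) interprets \(\BS1+\expax\) on a suitable cut, or in fact proves enough for the identity interpretation after the collection cut construction.

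For \eqref{eq:ind-spine}, the direction \(\BS m\vdash I\Sigma_{m-1}\) is Paris--Kirby, and it already gives \(\BS m\Int I\Sigma_{m-1}\). For the converse I use the Paris--Kirby/Pudlák construction: \(I\Sigma_{m-1}\) interprets \(\BS m\) on a definable cut. For \(m=1\) the claim \(\IDelta\Int\BS1\) follows from the cut argument above, since \(\IDelta\) interprets \(\IDelta+\OmegaOne\) after reconciling with \(\Q\). For \(m\ge2\), working in a model of \(I\Sigma_{m-1}\), one takes the cut of elements below which \(\Sigma_{m-1}\) collection holds on a definable initial segment. Alternatively, one uses the conservativity of \(\BS m\) over \(I\Sigma_{m-1}\) for \(\Pi_{m+1}\) sentences together with a relativisation. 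I expect this step to be the main obstacle, since the \(\Pi_{m+1}\)-conservativity does not by itself yield a parameter-free interpretation. I would therefore first try the cut-of-collection construction via Pudlák's method.

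Strictness is handled by consistency. Each stronger theory proves the consistency of a suitable finite fragment, or of the weaker theory on a cut. Specifically, \(\IDelta+\expax\) proves \(\mathrm{Con}\) of \(\Q\) restricted to a cut, \(I\Sigma_{n+1}\vdash\mathrm{Con}(I\Sigma_n)\), and \(\PA\) proves \(\mathrm{Con}(I\Sigma_n)\) for every \(n\). Combined with Pudlák's theorem that a sequential theory interpreting \(T\) cannot prove its own consistency on cuts, this shows that no weaker theory interprets a stronger one. Finally, \(\PA\) is not interpretable in any \(I\Sigma_n\), because \(\PA\) is essentially reflexive and the \(I\Sigma_n\) are finitely axiomatized, so Orey compactness gives a contradiction.
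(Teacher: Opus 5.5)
Like the paper, you reduce each arrow to known results, but one of those arrows you do not actually have. The converse half of \eqref{eq:ind-spine} for \(m\geq2\), namely \(\IS{m-1}\Int\BS{m}\), is left open, and the construction you suggest cannot supply it.

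\begin{itemize}
\item A cut of elements ``below which \(\Sigma_{m-1}\)-collection holds'' is vacuous as stated, since \(\IS{m-1}\) already proves \(\Sigma_{m-1}\)-collection. The interpreted structure must satisfy \(\Sigma_m\)-collection, and with it \(\IS{m-1}\).
\item Restricting to a cut relativises every unbounded quantifier, so \(\Sigma_k\)-induction for \(k\geq1\) is not in general inherited by cuts. A cut need not even be closed under the functions \(\IS{m-1}\) proves total.
\item The overspill argument that gives \(\BS1\) on a proper cut of a model of \(\IDelta\) works for a specific reason: \(\Delta_0\) formulas are absolute between the cut and the ambient model, and the ambient model has the \(\Delta_0\) least-number principle. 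Nothing of this kind survives for \(m\geq2\).
\item A detour through consistency also needs care. \(\IS{m-1}\) proves \(\mathrm{Con}_J(\BS{m})\) for no definable cut \(J\). Together with \(\BS{m}\vdash\IS{m-1}\), that would give \(\IS{m-1}\vdash\mathrm{Con}_J(\IS{m-1})\), contradicting Pudl\'ak's theorem for the finitely axiomatised sequential theory \(\IS{m-1}\).
\end{itemize}
What is required is a formalised version of the \(\Pi_{m+1}\)-conservation of \(\BS{m}\) over \(\IS{m-1}\) that can be converted into an interpretation. The paper does not reprove this; it takes it from \citet{Hajek1993Interpretability} and \citet{Cheng2021}, and your plan should do the same.

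Your strictness argument has the right idea but needs repair.
\begin{itemize}
\item The usable form of Pudl\'ak's theorem is this: a consistent, finitely axiomatised, sequential \(U\) proves \(\mathrm{Con}_J(U)\) for no definable cut \(J\). Combined with his cut-isomorphism lemma, \(U\Int V\) and \(V\vdash\mathrm{Con}(U)\) are then incompatible.
\item This handles \(\IS{n}\) for \(n\geq1\) and \(\IDelta+\expax\), which is finitely axiomatisable. It does not handle \(\Q\), which is not sequential. For the first step you must run the argument with a finitely axiomatised sequential theory in the degree of \(\Q\), and check that \(\IDelta+\expax\) proves its consistency on a cut.
\item The step from \(\IDelta+\expax\) to \(\IS1\), via \(\IS1\vdash\mathrm{Con}(\IDelta+\expax)\), is missing from your list.
\item Orey compactness concerns reflexive \emph{interpreting} theories, so it plays no role in showing that \(\IS{n}\) does not interpret \(\PA\). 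The fact \(\PA\vdash\mathrm{Con}(\IS{n})\) already settles that.
\end{itemize}

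Two smaller inaccuracies elsewhere:
\begin{itemize}
\item In \eqref{eq:exp-spine} the identity interpretation is not available, because \(\IDelta+\expax\nvdash\BS1\).
\item In \eqref{eq:q-spine}, finitely many Solovay shortenings secure only finitely many induction instances. Obtaining all of \(\IDelta\), and then \(\BS1\), is therefore a theorem you are quoting from Wilkie, Nelson and Wilkie--Paris, not something you derive. The paper likewise relies on the literature there.
\end{itemize}
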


The initial-segment interpretations used in \eqref{eq:q-spine} are given in \citet{Pudlak1985,Visser2017}. For \eqref{eq:exp-spine}, \eqref{eq:ind-spine}, and the strictness of the hierarchy, see \citet{Hajek1993Interpretability,Cheng2021}.

We have not found the following natural arithmetical interpretation in the literature, so we give the proof. It is well known that \(\PAminus+\mathsf{Coll}\) is \(\Pi_1\)-conservative over \(\PAminus\), and hence has the same consistency strength. In fact, the stronger theory is still interpretable in \(\Q\).
\begin{theorem}
  \(\Q\) interprets \(\PAminus+\mathsf{Coll}\).
\end{theorem}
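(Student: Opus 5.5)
The plan is to go through the arithmetic spine \eqref{eq:q-spine}. Since \(\Q\Int\IDelta+\OmegaOne\) and interpretations compose, it suffices to give a parameter-free interpretation of \(\PAminus+\mathsf{Coll}\) in \(\IDelta+\OmegaOne\), or even in \(\PAminus\) alone. Here \(\mathsf{Coll}\) is the arithmetical scheme
\[
 \forall x<a\,\exists y\,\varphi\longrightarrow\exists b\,\forall x<a\,\exists y<b\,\varphi .
\]

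\textbf{What the construction must achieve.} Collection can only fail at some \(a\) if the interval \([0,a)\) carries a definable function whose values are cofinal in the whole model. So I want an interpreted discretely ordered semiring \(N\) with two features:
\begin{itemize}
\item every element below any fixed element \(a\) lies in a \emph{small} part of \(N\) (a copy of the base model, or a controlled extension of it);
\item \(N\) is \emph{much taller} than that part, in a way that a definable function on a bounded interval cannot exploit.
\end{itemize}

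\textbf{Candidate structure.} Working in a model \(M\) of the interpreting theory, I take \(N\) to be the nonnegative part of a polynomial-type extension \(M[X]\), with \(X\) above every element of \(M\), ordered lexicographically by leading coefficient.
\begin{itemize}
\item Elements are coded as \(M\)-finite coefficient sequences. In \(\IDelta+\OmegaOne\) I would restrict the lengths to a definable cut on which sequence coding and polynomial multiplication are total; such cuts are exactly what the \(\OmegaOne\)-shortening techniques of \citet{Pudlak1985,Visser2017} provide.
\item Verifying the axioms of \(\PAminus\) (commutative discretely ordered semiring with least element \(0\) and subtraction for \(\le\)) should be routine on such a cut, using \(\Delta_0\) facts about sums and products of sequences.
\end{itemize}

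\textbf{Checking Collection.} Fix \(a\in N\) with degree \(d\), and an instance \(\forall x<a\,\exists y\,\varphi(x,y,\bar p)\). Every \(x<a\) has degree at most \(d\). I want to show that witnesses can be chosen of degree bounded by something depending only on \(a\), \(\bar p\) and \(\varphi\); then \(b=X^{e}\) for a larger exponent \(e\) collects them.

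The tool is a homogeneity argument. I would look for automorphisms, or at least elementary self-embeddings, of \(N\) that fix all polynomials of degree at most \(\max(d,\deg\bar p)\) and move higher degrees. A natural first attempt is a substitution \(X\mapsto X+c\) or \(X\mapsto cX\) with \(c\) chosen in \(M\), possibly combined with an automorphism of \(M\). Given such maps, if some \(x<a\) had a witness of degree \(e\), then one of the same bounded degree should already exist. Using the \(\PAminus\) least-number-free formulation, I would pick for each \(x\) an \(\in\)-minimal degree via the ambient induction of \(M\), which is legitimate because degree is an \(M\)-element.

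\textbf{Main obstacle.} The hard step is exactly this collection argument for arbitrary formulas \(\varphi\), which are not \(\Delta_0\) in \(N\). Substitutions like \(X\mapsto X+c\) are ring automorphisms but need not preserve the order. I expect either to prove a quantifier-reduction result for the interpreted \(N\) relative to \(M\) (degree-by-degree analysis of definable sets), or to switch to a more homogeneous choice. One such alternative is an ultrapower-like lexicographic sum of copies of \(M\) indexed by a dense definable order, where order-preserving shifts of the index set give the needed automorphisms fixing any bounded initial part.

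If both fail, the fallback is to use the \(\Pi_1\)-conservativity mentioned before the statement only as a guide, and to interpret directly a term-model or Henkin-style construction restricted to a cut. That is less elegant, but it is where I would look next.
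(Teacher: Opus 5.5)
\textbf{There is a genuine gap: your one-variable structure fails Collection outright.} So no cleverer homogeneity argument can repair it. Take the parameter $X$ and $a=X$. The elements below $a$ are exactly the constants, and the map $x\mapsto X^{x}$ is first-order definable on them in $N$:
\begin{itemize}
\item $y$ is a power of $X$ iff every divisor of $y$ other than $1$ is divisible by $X$;
\item for such $y$, $y=X^{x}$ iff $(X-1)^{2}$ divides $y-1-x(X-1)$ in the ring of differences. This uses $(X-1)^{2}\mid X^{k}-1-k(X-1)$ and the fact that no nonzero constant is divisible by $X-1$.
\end{itemize}
Let $\varphi(x,y)$ say ``if $X^{x}$ exists then $y=X^{x}$''; the guard takes care of your degree cut. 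Then the antecedent $\forall x<X\,\exists y\,\varphi$ holds. But any $b$ has some degree $d$, and at $x=d+1$ the only witness is $X^{d+1}>b$.

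So witnesses of bounded degree do not exist, and neither do the maps you are looking for. Contrary to what you wrote, $X\mapsto X+c$ does preserve the order, since leading coefficients are unchanged. But it preserves degree and moves $X$ itself. Moreover, no elementary self-embedding fixing $X$ and the constants can move $X^{x}$, which is definable from them. Your minimisation step would also fail independently: $\varphi^{*}$ has unbounded complexity in $M$, while $\IDelta+\OmegaOne$ has only bounded induction.

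\textbf{The missing idea} is to make the model tall through a \emph{dense, homogeneous family} of indeterminates rather than a single one. The paper works in $\mathsf B\Sigma_1+\OmegaOne$ with $\mathbb Z[X_q]_{q\in\mathbb Q}$. Monomials are ordered lexicographically with larger indices dominating, and positivity is the sign of the leading coefficient. Two facts then do the work.
\begin{itemize}
\item If $q_0$ exceeds every index occurring in $A$ and $\bar P$, then every $Z<A$ also involves only indices below $q_0$. Meanwhile $X_{q_1}$ dominates every polynomial whose indices all lie below $q_1$.
\item The paper compresses the indices above $q_0$ into $(q_0,q_1)$ by an order-preserving map of $\mathbb Q$ fixing $(-\infty,q_0]$. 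For a given witness $Y$ it suffices to move the finitely many indices of $Y$, which an automorphism of $\mathbb Q$ can do. The induced automorphism of the ordered semiring fixes $Z$, $A$ and $\bar P$. Hence the image of $Y$ is again a witness, and it lies below $X_{q_1}$.
\end{itemize}
Invariance of $\varphi^{*}$ under such a definable automorphism is proved by external induction on $\varphi$. So no quantifier reduction and no choice of least witnesses is needed.

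Your fallback of a lexicographic sum of copies of $M$ over a dense order points in this direction, but falls short in two ways. Such a sum is only an ordered set, not a semiring. And shifts of the index set do not fix an initial segment. What is needed is the multiplicatively closed monomial structure over the dense index order, together with maps that fix an initial segment of indices pointwise.
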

\begin{proof}
  Work first in a convenient bounded arithmetical theory interpretable in \(\Q\), such as \(\mathsf{B}\Sigma_1+\Omega_1\). Represent a rational number by three natural numbers: a numerator, a nonzero denominator, and a sign. Standard \(\Delta_0\)-definable pairing and projection functions represent these three entries by one number. Equality and order on the resulting rationals are likewise \(\Delta_0\)-definable.

  We represent the formal polynomial ring \(\mathbb Z[X_q]_{q\in\mathbb Q}\). A term \(X_{q_1}\cdots X_{q_n}\) is represented by a finite nonincreasing sequence \(q_1\geq\cdots\geq q_n\). A monomial is such a term together with an integer coefficient, and a polynomial is a finite sequence of monomials in decreasing order. All the required representations and projections are \(\Delta_0\)-definable, and equality and the ring operations on formal polynomials are computable in polynomial time. Order the terms lexicographically: set
  \[
  \langle q_1,\ldots,q_n\rangle>
  \langle q'_1,\ldots,q'_m\rangle
  \]
  if, at their first point of difference, \(q_i>q'_i\), or if the second sequence is a proper initial segment of the first. Declare a nonzero polynomial positive precisely when the coefficient of its largest term is positive. The nonnegative polynomials then form a model of \(\PAminus\).

  It remains to verify \(\mathsf{Coll}\). Suppose
  \(\forall X<A\,\exists Y\,\varphi(X,Y,\bar P)\). Only finitely many variables \(X_q\) occur in \(A\) and \(\bar P\), so there is a rational \(q_0\) such that both \(A\) and every member of \(\bar P\) are smaller than \(X_{q_0}\). Choose \(q_1>q_0\). The ordered field of rationals has an order isomorphism from \((q_0,+\infty)\) onto \((q_0,q_1)\). It induces an automorphism of the formal polynomial ring and fixes every polynomial whose variables have indices at most \(q_0\). Applying this automorphism to witnesses shows
  \[
  \forall X<A\,\exists Y<X_{q_1}\,\varphi(X,Y,\bar P),
  \]
  which is the required instance of Collection.
\end{proof}

\section{Interpreting Set Theory in Arithmetic}
\label{sec:upper}

We begin with a representation that will be used throughout this section. A rooted finite directed acyclic graph is written as \(G=(N,E,r)\), where \(N>0\) is the number of vertices, \(r<N\) is the root, \(E\) is a relation on \(N\), and
\[
 E(i,j)\longrightarrow j<i.
\]
The free variables \(i\) and \(j\) here range over vertices below \(N\); the formula says that every edge points to a vertex with a smaller number. In arithmetic, the relation \(E\) is represented by one number, accompanied by the finite sequence
\(1,2,4,\ldots,2^K\), where \(K\geq N^2\). Whether a given bit is one can then be decided by a bounded formula, and the existence of the finite sequence requires only \(\Omega_1\).

Suppose \(G=(N_G,E_G,r_G)\) and \(H=(N_H,E_H,r_H)\) are two such graphs, and let \(B\) be a relation on \(N_G\times N_H\). For vertices \(i<N_G\) and \(j<N_H\), require
\begin{align*}
B(i,j)&\longrightarrow
 \forall i'<N_G\bigl(E_G(i,i')\to
   \exists j'<N_H(E_H(j,j')\wedge B(i',j'))\bigr),\\
B(i,j)&\longrightarrow
 \forall j'<N_H\bigl(E_H(j,j')\to
   \exists i'<N_G(E_G(i,i')\wedge B(i',j'))\bigr).
\end{align*}
The free variables \(G,H,B,i,j\) have just been specified. The two formulas say that the immediate members of the related vertices can be matched in both directions. If such a relation \(B\) exists and \(B(r_G,r_H)\), write \(G\simeq H\). We take all rooted graphs as the interpreting domain, use \(\simeq\) as equality, and define
\[
 G\in^*H\quad\longleftrightarrow\quad
 \exists j<N_H\bigl(E_H(r_H,j)\wedge G\simeq H[j]\bigr),
\]
where \(H[j]\) is the same graph with \(j\) chosen as its root. Thus \(G\) represents the same set as one of the immediate members of \(H\). Composition of bisimulations shows that \(\simeq\) is an equivalence relation and that \(\in^*\) respects it. Extensionality follows directly from the two back-and-forth conditions above.

Empty Set, Pairing, Union, and Set Difference are obtained by taking disjoint copies of finitely many input graphs and adjoining a new root. Kuratowski ordered pairs and Cartesian products require only finitely many repetitions of the same construction. For each fixed \(\Delta_0\) formula, truth is evaluated recursively by bounded searches through the immediate members of the relevant roots; this gives \(\Delta_0\)-Separation. Given two input sets \(x,y\), the operation \(R_8(x,y)\) is obtained by adding, for every immediate member \(w\) of \(y\), a new vertex whose immediate members are precisely the \(u\) satisfying \(\langle u,w\rangle\in x\). The number of vertices required by each of these constructions is bounded by a fixed polynomial in the sizes of the input graphs.

One can likewise give finite tables listing the height of each vertex and all vertices reachable from the root. A vertex of least height gives Set Foundation. Adding one more root above all reachable vertices gives the least transitive set containing the input. This interprets \(\Tzero\) in \(\IDelta+\OmegaOne\). For a more detailed version of this finite-graph interpretation, compare \citet{Chen2026}.

\begin{theorem}
  \(\Q\) interprets \(\Tzero+\Coll{\Sigma_1}\).
\end{theorem}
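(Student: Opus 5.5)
We combine the two constructions already in place: the finite-graph interpretation of \(\Tzero\) from this section, and the earlier theorem that \(\Q\) interprets \(\PAminus+\mathsf{Coll}\). Interpretability composes, so it suffices to interpret \(\Tzero+\Coll{\Sigma_1}\) in a suitable arithmetic \(M\) that is itself interpretable in \(\Q\). The natural choice is a theory strong enough to run the graph construction and to have collection available for the relevant arithmetical formulas. Plain \(\PAminus+\mathsf{Coll}\) has no induction, so it cannot verify facts such as bisimulation being an equivalence relation. The plan is therefore to combine the two devices: first pass from \(\Q\) to a model of \(\IDelta+\OmegaOne\) (or \(\BS1+\OmegaOne\)) on a definable cut, as in \eqref{eq:q-spine}, and then form the graph model inside that cut. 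Collection will come from an automorphism argument modelled on the proof for \(\PAminus+\mathsf{Coll}\).

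Concretely, in place of the polynomial ring we use the finite-graph universe, but over "nonstandard-looking" indices. We work in \(\BS1+\OmegaOne\), interpretable in \(\Q\), and let the domain consist of hereditarily finite labelled graphs whose vertices may carry atoms indexed by rationals. We then take the quotient by bisimulation with the following coding: a set is a pair consisting of a rational \(q\) and a graph. Alternatively, and preferably, we use a symmetric construction in which the universe is stratified by a rational "rank bound" \(q\), with an order-isomorphism \((q_0,\infty)\to(q_0,q_1)\) acting on it. Collection then follows as before. Suppose \(\forall x\in a\,\exists y\,\varphi\) with \(\varphi\in\Sigma_1\), and absorb the leading existential into \(y\). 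The finitely many indices in \(a,\bar p\) lie below some \(q_0\). The automorphism fixing everything below \(q_0\) moves any witness into the part with indices below \(q_1\). That part is a set, namely the "set of all graphs with indices \(<q_1\)" up to a size bound, and the bound is available because the witness can also be shrunk.

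The delicate issue is that a set of all objects below \(q_1\) would have unbounded size. So the automorphism must also compress size, not only indices. The key step, and the main obstacle, is therefore to design the ground model so that an automorphism fixes \(a,\bar p\) and maps every element into a single bounded set. I would first try a model whose ordinals or heights are indexed by the nonnegative formal polynomials of the previous theorem. Sets would be finite graphs in the sense above, but built over the \(\PAminus\)-model, with height measured by a polynomial. Collection would then be proved by applying the induced automorphism to heights: all witnesses are squeezed below height \(X_{q_1}\), and the \(\Sigma_1\) truth of \(\varphi\) is preserved because automorphisms preserve all formulas.

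The remaining verifications are to check that the axioms of \(\Tzero\) (Extensionality, \(\Delta_0\)-Separation, \(R_8\), \(\SetFnd\), \(\TCl\)) go through exactly as in the finite-graph interpretation, and that the bounded set collecting the witnesses exists in the model. If this fails, the fallback is to prove only the \(\Delta_0\)-witness form, which over \(\Tzero\) is equivalent to \(\Coll{\Sigma_1}\), and to collect via \(\BS1\) in the cut.
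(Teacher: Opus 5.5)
Your main line, the automorphism argument, is not a proof, and it breaks at the step you yourself flag as the main obstacle. The trick in the proof that \(\Q\) interprets \(\PAminus+\mathsf{Coll}\) works only because one builds a specific structure, the polynomial semiring, whose automorphisms are explicitly known. A model of \(\BS{1}+\OmegaOne\) living on a cut of an arbitrary model of \(\Q\) comes with no such automorphisms. The polynomial structure also cannot serve as the ground for the graph model instead, because it fails even \(\IDelta\): for example, \(X_q\) is neither even nor odd. That induction is precisely what you rightly said is needed for bisimulation, heights, \(\SetFnd\) and \(\TCl\), so ``finite graphs built over the \(\PAminus\)-model'' runs into your own objection. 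Moreover, suppose some automorphism did move every witness to indices below \(q_1\). No set would yet have been produced. A set in this interpretation is a single internally finite graph, so Collection needs one numerical bound on the codes of all the chosen witnesses. The ``compression of size'' that would supply such a bound is left undesigned.

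The missing observation is that no automorphism is needed: \(\BS{1}\), which you already have on the cut, is itself the required bounding principle. Your one-sentence fallback is therefore the whole proof, and it is the paper's route. Three things have to be checked.

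First, write \(\varphi\equiv\exists z\,\delta\) with \(\delta\in\Delta_0\). The statement that \(Y,Z\) code rooted acyclic graphs and \(\delta(G[i],Y,Z,\bar P)\) holds has no unbounded quantifiers other than \(Y,Z\). Each set-bounded quantifier becomes a search through finitely many vertices, and the bisimulations needed for atomic formulas are bounded objects. So the translated antecedent has the form \(\forall i<N_G\,\exists Y\,\exists Z\,W\) with \(W\) bounded, and \(\BS{1}\) gives a single \(b\) such that every relevant \(i\) has a witness pair below \(b\).

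Second, choosing for each \(i\) the least such pair makes the collecting graph definable by a bounded formula.

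Third, a code below \(b\) uses at most linearly many vertices in the binary length of \(b\). Hence the graph formed from disjoint copies of the chosen graphs \(Y\) under a new root has polynomially many vertices in \(N_G\) and that length, and \(\OmegaOne\) guarantees that its edge relation has a code. This is where \(\OmegaOne\) is genuinely used.

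Your reduction of \(\Coll{\Sigma_1}\) to its \(\Delta_0\) form is correct over \(\Tzero\), but it is unnecessary once you search for the pair \((Y,Z)\) directly.
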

\begin{proof}
  By \eqref{eq:q-spine}, it is enough to extend the preceding interpretation in \(\BS1+\OmegaOne\) so that it satisfies \(\Sigma_1\)-Collection. Fix a \(\Sigma_1\) formula \(\varphi(x,y,\bar p)\), written as
  \[
    \varphi(x,y,\bar p)\equiv\exists z\,\delta(x,y,z,\bar p),
  \]
  where \(x\) denotes a member of the input set, \(y,z\) are the two sets to be found, \(\bar p\) is a tuple of parameters, and \(\delta\) is \(\Delta_0\). Let \(G\) represent the set \(a\) in the Collection instance, let \(i<N_G\) denote one of its immediate members, let \(Y,Z\) be two candidate graphs, and let \(\bar P\) represent the parameters \(\bar p\). Consider the arithmetical formula
  \[
  W(i,Y,Z,\bar P)\longleftrightarrow
  Y,Z\text{ are finite directed acyclic graphs and }
  \delta(G[i],Y,Z,\bar P)\text{ holds}.
  \]
  Every bounded quantifier in \(\delta\) is translated into a search through finitely many immediate members. Apart from the initial variables \(Y,Z\), all quantifiers on the right are therefore bounded, so \(W\) is a \(\Sigma_1^0\) formula.

  Suppose the translation of the antecedent of Collection holds. The principle \(B\Sigma_1\) gives a number \(b\) such that, for every relevant \(i<N_G\), some pair \(Y,Z<b\) satisfies \(W(i,Y,Z,\bar P)\). For each \(i\), choose the lexicographically least such pair. Place disjoint copies of the resulting graphs \(Y\) on separate vertex intervals, and add a new root pointing to their old roots. If \(Y<b\), the number of vertices actually used by \(Y\) is bounded linearly by the binary length of \(b\). The total number of vertices in the new graph is consequently bounded by a fixed polynomial in \(N_G\) and that length. The axiom \(\Omega_1\) is therefore sufficient to represent all its edges. For each member of \(G\), the set represented by the new root contains a \(\varphi\)-witness. This proves the given instance of \(\Sigma_1\)-Collection.
\end{proof}

\begin{theorem}
  \(\Q\) interprets \(\Tzero+\Inf\).
\end{theorem}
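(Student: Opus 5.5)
We will interpret \(\Tzero+\Inf\) in \(\Q\) by going through \(\IDelta+\OmegaOne\), which by \eqref{eq:q-spine} is mutually interpretable with \(\Q\). Within \(\IDelta+\OmegaOne\) we first carve out a suitable definable cut \(J\), on which we repeat the finite-graph construction given at the start of this section. The idea is to model \(\omega\) by the ordinals \(\{0,1,\dots,n-1,\dots\}\) lying in \(J\). The collection of codes of these ordinals is not itself a finite graph, so we have to enlarge the domain of the interpretation. We therefore take as objects rooted directed acyclic graphs \(G=(N,E,r)\) together with an extra unary marking \(\mathrm{inf}\subseteq N\) of \emph{infinite} vertices. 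A marked vertex stands for a vertex whose immediate members are all the finite ordinals, i.e. it represents \(\omega\). The point to ensure is that, in the interpreted structure, \(\in^*\) and \(\simeq\) stay definable: a marked vertex has as members exactly the graphs that represent a von Neumann ordinal \(n\in J\).

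The steps, in order, are as follows.

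\textbf{Step 1.} Define \(G\in^* H\). If the root of \(H\) is unmarked, we use the old clause. If it is marked, \(G\in^*H\) holds iff \(G\) represents some finite von Neumann ordinal. This in turn is a bounded property of \(G\): it is transitive, linearly ordered by \(E\), and every vertex is unmarked.

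\textbf{Step 2.} Define bisimulation \(\simeq\) for marked graphs by adding the clause that a marked vertex is related only to marked vertices, together with the rule for matching a marked vertex against an explicit vertex. Since \(\omega\) is never equal to a finite graph \emph{in the standard part}, we simply make marked vertices bisimilar exactly to marked vertices. Extensionality then has to be checked. Two unmarked sets with the same members are bisimilar as before. A marked set and an unmarked finite set \(H\) cannot have the same members, because \(H\) has only \(N_H\) members while the marked set contains the ordinals of every length in \(J\). This is where we need \(J\) to be closed under successor and, more importantly, to be a proper cut below which \(N_H\) bounds do not reach. We would instead restrict graph sizes to a cut \(I\) with \(I\subsetneq\) something. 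The simpler option is to take the domain to be graphs with \(N\in J\) and let marked vertices stand for all ordinals in \(J\), with \(J\) closed under \(+1\). Then an unmarked \(H\) with \(N_H\in J\) misses the ordinal \(N_H\), which is in \(J\).

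\textbf{Step 3.} Recheck the remaining axioms of \(\Tzero\): Pairing, Union, Difference, Cartesian product, \(R_8\), and \(\Delta_0\)-Separation. Here every quantifier over members of a marked vertex ranges over the ordinals \(<\) some element of \(J\). We evaluate it by replacing \(\forall x\in\omega\) with a quantifier over \(n\in J\). For a fixed \(\Delta_0\) formula the truth evaluation then remains a definable formula, though no longer bounded. Separation on \(\omega\) produces a subset of \(J\) that must be coded as a finite graph, and this is the main obstacle: the subset could be infinite, i.e. cofinal in \(J\). We plan to handle it by shortening \(J\) so that, for each fixed \(\Delta_0\) formula, definable subsets of \(J\) are bounded or co-bounded. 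The fallback is to allow markings meaning \(\omega\setminus\{\text{finite list}\}\) and \(\omega\cup\{\dots\}\). Union and Cartesian product of \(\omega\) would similarly require marked vertex types for \(\omega\times a\). Limiting this to finitely many closure types is the crux, and we expect it to need a careful normal form showing that sets in the interpretation are finite Boolean and product combinations of \(\omega\) with finite graphs.

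\textbf{Step 4.} Foundation and \(\TCl\) then follow as before, with \(\omega\) transitive and the least element in \(\omega\) being \(0\). Finally, \(\Inf\) holds because the marked root represents exactly the finite ordinals.
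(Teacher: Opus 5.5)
There is a genuine gap, and it is the step you flag yourself. Step 3 is only a plan: you never show that the structure with marked vertices is closed under the operations of \(\Tzero\) and satisfies \(\Delta_0\)-Separation. Moreover, the normal form you hope for cannot exist. \(\Tzero\) includes Cartesian Product and \(R_8\), so your structure must contain \(\omega\times\omega\). By \(\Delta_0\)-Separation it then contains \(x=\{\langle u,w\rangle\in\omega\times\omega:w\in u\}\), and hence also \(R_8(x,\omega)=\{\omega\setminus(w\cup\{w\}):w\in\omega\}\). This is an infinite set of pairwise distinct infinite sets. Applying products, Separation and \(R_8\) once more gives \(\{\{\omega\setminus(w\cup\{w\}):w\in v\}:v\in\omega\}\). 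Iterating produces sets of unbounded nesting depth. So no finite list of marking types is closed under the operations, and these sets are not finite Boolean and product combinations of \(\omega\) with finite graphs. Any repair must let each object record an open-ended description of how it was built.

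There is a second, independent problem with Steps 3 and 4. Once the members of a marked vertex range over the ordinals of the cut \(J\), a set quantifier \(\forall x\in\omega\) becomes a quantifier over a definable cut, not below a number. The translations of \(\Delta_0\) formulas are then no longer bounded arithmetical formulas. Unless you first prove that each translation is equivalent to a bounded formula (a quantifier-elimination argument you do not supply), nothing in \(\IDelta+\OmegaOne\) guarantees two things you need. First, that a code for \(\{i<N_H:\varphi^*(i)\}\) exists, which Separation requires. Second, that such a definable set of ordinals has a least element, which your Step 4 assumes for Set Foundation.

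The paper avoids both problems by not using graphs for this theorem. Following Mathias's \(G_2\) construction, its objects are terms built level by level from \(W_0=\N\) by a fixed finite list of elementary operations. Equality and membership at level \(r+1\) are reduced by fixed clauses to level \(r\), and each object carries a finite table certifying these computations, checkable by bounded formulas. This gives the following:
\begin{itemize}
\item Closure under the operations of \(\GJ\) is automatic: apply one more operation symbol.
\item \(\omega\) is a single fixed term whose members are the level-zero terms, and the natural numbers of the model themselves serve as the finite ordinals, so no cut is needed.
\item Set Foundation comes from choosing the least level.
\item \(\TCl\) comes from iterating \(z\mapsto z\cup\bigcup z\) exactly \(r\) times.
\end{itemize}
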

\begin{proof}
  By \eqref{eq:q-spine}, we work in \(\IDelta+\OmegaOne\). A finite graph cannot contain a set whose members are exactly all finite ordinals, so we instead use terms arranged in levels. The construction is modelled on the \(G_2\) construction of \citet{Mathias2006}. Let \(W_0=\N\), and fix a natural number \(t_0\) depending only on the finitely many elementary set operations required by \(\GJ\). For every natural number \(r\), define recursively
  \[
    W_{r+1}=\{\langle t,u,v\rangle:t<t_0,\ u,v\in W_r\}.
  \]
  Here \(r\) is the level, \(t\) specifies the operation, and \(u,v\) are its two inputs; the triple is represented by the usual arithmetical pairing functions.

  At level zero, put \(u\in_0v\) exactly when \(u<v\), and put \(u=_0v\) exactly when \(u=v\). For each elementary set operation there is a fixed definition reducing equality and membership between two terms at level \(r+1\) to finitely many equalities and membership statements at level \(r\). Given \(r\) and finitely many terms, list together all numerical sequences and truth tables needed for the successive calculations. A bounded formula checks that the first row is given by \(<\) and \(=\), and that each later row follows from its predecessor by the fixed definitions. The finite geometric progressions and products needed to place every row in a single number are included in the same table. The theory \(\IDelta+\OmegaOne\) verifies every correct table and proves that every correct table through level \(r\) can be extended by the next row.

  An object in the interpretation is a triple \((r,w,Z)\), where \(w\in W_r\) and \(Z\) is a complete finite table from level zero through level \(r\). When two objects lie at different levels, repeatedly regard the lower-level term as the corresponding term one level higher, and then read equality or membership from the table at the higher level. The fixed definitions immediately give
  \[
    u=_rv\longleftrightarrow ju=_{r+1}jv,
    \qquad
    u\in_rv\longleftrightarrow ju\in_{r+1}jv,
  \]
  where \(j\) is the fixed operation that places an old term at the next level without changing the set it represents. Hence equality is an equivalence relation, membership respects it, and both are defined without parameters.

  Empty Set, Pairing, Set Difference, Union, Cartesian Product, and \(R_8\) are given by their corresponding fixed terms; applying one operation adds only a fixed finite number of rows. For any fixed \(\Delta_0\) formula, replace its atomic formulas by the equality and membership relations just defined, and then proceed through its Boolean connectives and bounded quantifiers. Its truth value is thereby determined by a finite table, which also yields \(\Delta_0\)-Separation.

  The first level of an object \((r,w,Z)\) is the least level at most \(r\) containing a term equal to \(w\). If this number is positive, every member of the object can be represented at a lower level; if it is zero, membership is simply \(<\) on the natural numbers. Choosing first the least level and, at level zero, the least natural number gives Set Foundation.

  One fixed elementary operation forms a set \(\Omega\) whose members are exactly all level-zero terms. These terms represent the finite von Neumann ordinals \(0,1,2,\ldots\), so \(\Omega\) satisfies \(\Inf\). Finally, for a term \(x\) at level \(r\), define successively
  \[
    C_0(x)=x,
    \qquad C_{k+1}(x)=C_k(x)\cup\bigcup C_k(x)\quad(k<r).
  \]
  The variable \(k\) records the number of completed steps, and \(C_k(x)\) is the set obtained at step \(k\). Each step uses the fixed elementary operations already available, so the whole finite calculation can be written beside \(Z\) and checked row by row. Every member of a set at level \(r+1\) can be represented at level \(r\); hence \(C_r(x)\) is transitive. Conversely, any transitive set containing \(x\) must contain each \(C_k(x)\) in turn. Thus \(C_r(x)\) is the least transitive set containing \(x\), and the interpretation satisfies \(\Tzero+\Inf\).
\end{proof}

The preceding interpretation of \(\PAminus+\mathsf{Coll}\) in \(\Q\) has a set-theoretic analogue.
\begin{theorem}
  \(\Q\) interprets \(\mathsf S_0+\mathsf{Coll}\).
\end{theorem}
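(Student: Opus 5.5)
The plan is to copy the proof that \(\Q\) interprets \(\PAminus+\mathsf{Coll}\). We work inside \(\BS1+\OmegaOne\), which \(\Q\) interprets by \eqref{eq:q-spine}. We build a set-theoretic structure whose objects are finite codes over the rationals and which carries a large group of order automorphisms. Collection then follows from a homogeneity argument, exactly as before. Since \(\Szero\) does not include Foundation, we may use finitely many non-well-founded ``generators''. Concretely, for each rational \(q\) we want a Quine-type atom \(a_q=\{a_q\}\), with distinct \(q\) giving distinct atoms. The domain consists of finite rooted graphs \(G=(N,E,r)\), as in \cref{sec:upper}, except that we drop the acyclicity condition \(E(i,j)\to j<i\). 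In addition, each vertex may carry a rational tag, and every tagged vertex has exactly one edge, a self-loop. All of this is coded by one number together with a \(\Delta_0\)-definable table of tags, so it needs only \(\OmegaOne\).

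Equality is tag-preserving bisimulation. The back-and-forth conditions are those displayed before the definition of \(\in^*\), together with the requirement that related vertices have the same tag or are both untagged. Membership \(\in^*\) is defined as before. Extensionality fails on the raw graphs, because an untagged vertex whose only child is a tagged vertex \(a_q\) has the same members as \(a_q\). I would therefore first \emph{normalise}: by a bounded search, identify every such untagged vertex with \(a_q\). Equality is then taken to be bisimulation between normalised graphs. With this convention, Extensionality follows from the two back-and-forth clauses. Empty Set, Pairing, Union and Set Difference are obtained as in \cref{sec:upper}, by taking disjoint copies of the inputs, adding a new root, and then normalising. Each of these constructions has polynomial size in its inputs.

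For Collection, first note that every order automorphism \(\sigma\) of \(\mathbb Q\) acts on codes by renaming tags. This action preserves \(=^*\) and \(\in^*\), so it is an automorphism of the interpreted structure. Now suppose \(\forall x\in a\,\exists y\,\varphi(x,y,\bar p)\) holds in the interpretation. Only finitely many tags occur in the codes of \(a\) and \(\bar p\), so there is a rational \(q_0\) exceeding all of them. Fix \(q_1>q_0\), and let \(\sigma\) fix \((-\infty,q_0]\) and map \((q_0,\infty)\) order-isomorphically onto \((q_0,q_1)\). Applying \(\sigma\) to an arbitrary witness \(y\) gives a witness whose tags all lie below \(q_1\), while \(a\) and \(\bar p\) are fixed. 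This holds for every \(\varphi\), because the argument only uses that \(\sigma\) is an automorphism. Note that, unlike the \(\Sigma_1\) case, no appeal to \(\BS1\) is made here.

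It remains to produce a single set \(b\) containing a witness for each \(x\in a\). Here the arithmetic part must enter. The set \(b\) must contain, for each of the finitely many members of \(a\), some witness with tags below \(q_1\). Bounding the tags alone does not bound the size of the witness graphs, and a graph-size bound is needed before we can form the disjoint union. This is the step I expect to be the main obstacle. My first attempt is to choose the order of search so that it mirrors the polynomial proof: there, the bound \(Y<X_{q_1}\) was itself an element of the model. In the same spirit I would take \(b\) to be a fixed set built from the single atom \(a_{q_1}\), chosen so that every witness with tags below \(q_1\) is already a member of \(b\) up to an automorphism fixing \(a\) and \(\bar p\). To make this possible, I would enlarge the domain, if necessary, by allowing graphs whose tags range over finite sets of rationals closed under the relevant operations. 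The difficulty is to keep normalisation and Extensionality intact under this enlargement. If that fails, the fallback is to combine the automorphism argument with \(\BS1\), as in the proof of \(\Sigma_1\)-Collection above.
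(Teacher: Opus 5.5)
There is a genuine gap, exactly at the step you flag as the main obstacle: you never construct the collecting set $b$, and in a universe of internally finite graphs it cannot be constructed.

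Renaming tags controls only which atoms occur in a witness, not its size. Every hereditarily finite well-founded set, in particular every von Neumann ordinal, is fixed by all of your automorphisms. So witnesses ``up to automorphism'' still fall into unboundedly many orbits, and no fixed finite set built from $a_{q_1}$ meets all of them. Your fallback through $\BS1$ only covers formulas whose arithmetical translation is $\Sigma^0_1$. An arbitrary $\varphi$ translates to a $\Sigma^0_n$ formula and would need $\BS{n}$, which is not available.

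The obstruction is not merely technical. Suppose every interpreted set is internally finite and the domain contains the internal hereditarily finite sets. Then Collection applied to formulas about von Neumann ordinals yields bounding for arithmetical formulas of every complexity on the cut of ordinals, hence $\PA$ on that cut. Since $\Q$ does not interpret $\PA$, no refinement of your finite-graph model can satisfy full Collection.

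The missing idea is that $\Szero$ has no Separation, so the interpreted universe may contain infinite sets, in particular a universal set, and then Collection is trivial. The paper works in $\IDelta+\OmegaOne$ with well-founded finite graphs and attaches a sign to each root. The object $+A$ has as members exactly the objects bisimilar to an entry of the finite list $A$, and $-A$ exactly the objects bisimilar to none of them.

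A positive and a negative object never have the same members, because any finite list misses some sufficiently tall chain. Extensionality therefore reduces to the usual bisimulation argument. Set Difference and Union are computed by finite Boolean operations on the lists; for example, the union of a negative set is universal. Each operation grows the graphs only polynomially.

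Since $-\varnothing$ is universal, every instance of Collection holds with $b=-\varnothing$. Quine atoms, rational tags and the automorphism argument are unnecessary. The parallel with $\PAminus+\mathsf{Coll}$ is only that a single object bounds all witnesses; here that object is the universal set rather than $X_{q_1}$.
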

\begin{proof}
  By \eqref{eq:q-spine}, we again work in \(\IDelta+\OmegaOne\). We interpret a structure satisfying \(\Szero\) and containing a universal set; every instance of Collection then follows immediately from that universal set.

  Begin again with rooted finite directed acyclic graphs, but attach a positive or negative sign to each root. If \(A\) is a finite list of rooted graphs, let the members of \(+A\) be exactly the objects bisimilar to an entry of \(A\), and let the members of \(-A\) be exactly the objects not bisimilar to any entry of \(A\):
  \[
  X\in^*(+A)\longleftrightarrow\exists a\in A\,(X\simeq a),
  \qquad
  X\in^*(-A)\longleftrightarrow\neg\exists a\in A\,(X\simeq a).
  \]
  The free variable \(X\) denotes an arbitrary object and \(A\) denotes the given finite list. Thus a positive list names the members, whereas a negative list names the nonmembers. For every finite list, one can choose a positive one-element chain whose height is greater than the height of every positive graph in the list. This new object is equal neither to any positive object in the list nor to any negative object in it. It follows that a positive object and a negative object cannot have exactly the same members. Together with the usual finite bisimulation argument, this proves Extensionality.

  The object \(+\varnothing\) is empty, \(-\varnothing\) is universal, and Pairing is obtained with a positive sign. Set Difference is given by
  \[
  \begin{array}{rclcrcl}
  (+A)\setminus(+B)&=&+(A\setminus B),&&
  (+A)\setminus(-B)&=&+(A\cap B),\\
  (-A)\setminus(+B)&=&-(A\cup B),&&
  (-A)\setminus(-B)&=&+(B\setminus A).
  \end{array}
  \]
  Here \(A\) and \(B\) are finite lists, and the unions, intersections, and differences on the right are finite operations. Suppose a positive outer set has positive members \(+A_i\) and negative members \(-B_j\). If there are no negative members, its union is \(+\bigcup_i A_i\). Otherwise its union is
  \[
    -\left(\bigcap_jB_j\setminus\bigcup_iA_i\right).
  \]
  If the outer set is negative, then for any object \(x\) one may choose a new object \(t\) outside its finite exclusion list. The pair \(\{x,t\}\) still belongs to the negative outer set, so its union is the universal set. Each of these operations increases the number of vertices by at most a fixed polynomial in the input sizes, and can therefore be carried out in \(\IDelta+\OmegaOne\). Finally, whenever the antecedent of an instance of Collection holds, the universal set contains every possible witness.
\end{proof}

We next use the additional strength of \(\BS1+\expax\).

\begin{theorem}
  \(\BS1+\expax\) interprets \(\Tzero+\Pow+\Coll{\Sigma_1}\).
\end{theorem}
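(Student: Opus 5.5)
We would reuse the finite-graph interpretation from the beginning of this section, now carried out inside \(\BS1+\expax\), where every number \(2^n\) exists. The domain, the equality \(\simeq\) given by bisimulation, and the membership \(\in^*\) are unchanged. The verification of \(\Tzero\) then goes through verbatim, since \(\IDelta+\expax\) extends \(\IDelta+\OmegaOne\). The collection argument from the proof that \(\Q\) interprets \(\Tzero+\Coll{\Sigma_1}\) also carries over. In that argument we used \(B\Sigma_1\) to bound the codes of the witness graphs below some \(b\), chose the least witness for each immediate member of \(G\), and joined disjoint copies under a new root. Under \(\expax\) the size estimates become easier, because the combined graph has at most \(N_G\cdot|b|\) vertices and its edge relation can be coded directly as a number below \(2^{K}\) with \(K\) polynomial in the inputs. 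The translation of a \(\Sigma_1\) formula is still \(\Sigma^0_1\) once the bounded quantifiers are evaluated through the finite tables, so \(\BS1\) applies exactly as before.

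The new ingredient is \(\Pow\). Given a graph \(G=(N,E,r)\), let \(i_0,\dots,i_{m-1}\) be the immediate members of the root. These can be listed within \(\IDelta\), because \(m\le N\). Every subset of the represented set is bisimilar to a graph determined by some \(s<2^m\): take \(G\) with a new root whose edges go to those \(i_k\) with bit \(k\) of \(s\) equal to one. The power-set graph \(P(G)\) has vertex set consisting of \(N\) old vertices, \(2^m\) new subset vertices, and one top root. Its edges are those of \(E\), the edges from subset vertex \(s\) to \(i_k\) for each bit \(k\) of \(s\), and edges from the top root to every subset vertex. Vertices are numbered so that every edge decreases, with old vertices first, then subset vertices, then the root. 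The number of vertices is \(N+2^m+1\), and coding the edge relation needs about \((2^N)^2\) bits, hence a number of size roughly \(2^{2^{2N}}\). This requires only finitely many applications of \(\expax\), which is exactly where \(\Omega_1\) no longer suffices.

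We then verify \(\Pow\) inside the interpretation. Take any \(Y\) satisfying \(Y\subseteq^* G\). For each immediate member \(j\) of the root of \(Y\), there is some \(k<m\) with \(Y[j]\simeq G[i_k]\). We need the set \(s=\{k<m:\exists j\,(E_Y(r_Y,j)\wedge Y[j]\simeq G[i_k])\}\) to exist as a number below \(2^m\), and we need a bisimulation between \(Y\) and the subset vertex for \(s\).

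We expect this to be the main obstacle. The relation \(\simeq\) is defined by an existential quantifier over bisimulations \(B\), so it is \(\Sigma^0_1\), and the set \(s\) is defined by a \(\Sigma^0_1\) formula. \(\IDelta\) alone does not give \(\Sigma_1\)-comprehension on \([0,m)\). Our first approach is to show that bisimilarity between two given finite graphs is witnessed by a canonical, bounded object: the \emph{maximal} bisimulation, computed by iterated refinement over \(N_G\times N_H\). This object is \(\Delta_0\)-definable given \(\expax\), because the refinement sequence has length at most \(N_GN_H\) and can be coded as a single number. With that, \(\simeq\) becomes \(\Delta_0\) in the codes, \(s\) exists by \(\Delta_0\)-comprehension on a bounded domain (available in \(\IDelta+\expax\)), and the bisimulation \(B\) between \(Y\) and \(P(G)[s]\) is assembled from the maximal bisimulations \(B_j\).

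If the refinement argument proves awkward, the fallback is \(B\Sigma_1\). It yields a single bound on all the witnessing bisimulations for the \(j\)'s, after which the definition of \(s\) becomes bounded. Extensionality, Foundation and \(\TCl\) for the new graphs then follow from the general arguments already given.
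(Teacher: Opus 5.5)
Your proposal is correct and follows the paper's proof. Both use the same finite-graph interpretation, the unchanged \(B\Sigma_1\) argument for Collection, and a power-set graph with \(2^m\) subset vertices that select members of the root by binary digits, with \(\expax\) guaranteeing the code exists; you share the old vertices instead of taking disjoint copies and skip deduplication up to bisimulation, both harmlessly. You are more careful than the paper about showing every interpreted subset is some subset vertex, but the obstacle you flag dissolves more simply than you expect: any bisimulation between \(Y\) and \(P(G)\) is a relation on \(N_Y\times N_{P(G)}\), so its code can be taken below \(2^{N_Y N_{P(G)}}\), which exists under \(\expax\). Hence \(\simeq\) is already bounded, and \(s\) exists by bounded \(\Delta_0\)-comprehension without either the refinement argument or \(B\Sigma_1\).
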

\begin{proof}
  Use the finite-graph interpretation from the beginning of this section. The proof of \(\Sigma_1\)-Collection is exactly the proof of the first theorem, since the required principle \(B\Sigma_1\) is already part of the present theory. It remains only to verify Power Set.

  Suppose the root of \(G\) has \(s\) distinct immediate members, where members identified by bisimulation are counted only once. For each \(e<2^s\), use the \(i\)-th binary digit of \(e\) to decide whether to include the \(i\)-th member. Take disjoint copies of the chosen graphs and add a new root above them. Add one further root whose immediate members are the \(2^s\) roots just constructed. Every \(e\) yields a subset of \(G\); conversely, every subset of \(G\) is uniquely determined by the members it contains and therefore arises from some \(e<2^s\). The totality of exponentiation guarantees the existence of the new graph, whose outermost root represents the power set of \(G\).
\end{proof}

We now pass to \(\IS{n}\), where \(n\geq1\). The same finite-graph interpretation can then handle formulas of higher complexity.

\begin{theorem}
  For every \(n\geq1\), \(\IS{n}\) interprets
  \[
    \Tzero+\Pow+\Coll{\Sigma_{n+1}}+\Fnd{\Sigma_n}+\Fnd{\Pi_n}.
  \]
\end{theorem}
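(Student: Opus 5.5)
We use the finite-graph interpretation from the beginning of this section, now carried out inside \(\IS{n}\). Since \(\IS{n}\vdash\IS1\vdash\expax\), the previous theorem already gives \(\Tzero+\Pow\) in this interpretation. What remains is \(\Coll{\Sigma_{n+1}}\) and Foundation for \(\Sigma_n\) and \(\Pi_n\) formulas.

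The first step is a translation lemma. The translation of a set-theoretic \(\Sigma_k\) formula (\(k\geq1\)) is equivalent in \(\IDelta+\expax\) to an arithmetical \(\Sigma_k^0\) formula, and likewise for \(\Pi_k\). The base case is the observation already used in the \(\Sigma_1\)-Collection proof. Once the graph variables are fixed, a translated \(\Delta_0\) formula is evaluated by bounded search through immediate members. Bisimilarity \(G\simeq H\) is also \(\Delta_0^0\) in the codes: it asks for a relation \(B\subseteq N_G\times N_H\), whose code is bounded by \(2^{N_GN_H}\), and this bound exists by \(\expax\). An unbounded set quantifier becomes an unbounded number quantifier over codes of graphs, so the prenex structure is preserved.

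For \(\Coll{\Sigma_{n+1}}\), write \(\varphi(x,y,\bar p)\equiv\exists z\,\psi(x,y,z,\bar p)\) with \(\psi\in\Pi_n\). Its translation \(W(i,Y,Z,\bar P)\) is then \(\Pi_n^0\). Assume the antecedent holds for a graph \(G\). Then \(\forall i<N_G\,\exists\langle Y,Z\rangle\,W\) holds, and \(B\Sigma_{n+1}\), which \(\IS{n}\) proves, gives a bound \(b\) on the witnesses. We then build a graph collecting the witnesses, as in the \(\Sigma_1\) case. Choosing a witness for each \(i\) is unnecessary: it suffices to collect every graph \(Y<b\) for which some \(Z<b\) makes \(W\) true.

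This is the step I expect to be the main obstacle. The set of such \(Y\) is defined by a \(\Pi_n^0\) condition, and the graph must be a single finite object. I would avoid forming that set altogether. Instead, take the graph whose root points to \emph{all} graphs coded below \(b\). It exists by \(\expax\), since the total number of vertices is at most about \(b\cdot\log b\), and it clearly contains a \(\varphi\)-witness for each member of \(G\). Collection only requires \(b\) to contain some witness for each member, so no definability issue arises. The same trick could have replaced the least-pair choice in the \(\Sigma_1\) proof.

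For \(\Fnd{\Sigma_n}\), suppose \(A(x,\bar p)\in\Sigma_n\) has a witness \(G\), and take the translation \(A^*\in\Sigma_n^0\). Consider the formula \(\exists G'\,(A^*(G')\wedge\mathrm{rank}(G')=h)\), where the rank of a graph is computed from its height table, which is \(\Delta_0\) in \(\expax\). This formula is \(\Sigma_n^0\) in \(h\). By the least number principle for \(\Sigma_n\), available in \(\IS{n}\), there is a least such \(h\), with a corresponding \(G'\). Every immediate member of \(G'\) has smaller rank, so none satisfies \(A\), and \(G'\) is \(\in\)-minimal.

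For \(\Pi_n\) the same formula is \(\exists G'(\Pi_n^0)\), which is \(\Sigma_{n+1}^0\), so the least number principle for it is not available in \(\IS{n}\). I would use the restriction to members of the witness instead. Minimal elements may be sought below the transitive closure of the given witness \(G\), so \(G'\) ranges over vertices \(v<N_G\) of \(G\), taken as \(G[v]\). The formula \(\exists v<N_G\,(A^*(G[v])\wedge\mathrm{rank}(v)=h)\) is then \(\Pi_n^0\) up to a bounded quantifier. By \(B\Sigma_n\) it is equivalent to a \(\Pi_n^0\) formula. \(\IS{n}\) proves the least number principle for \(\Pi_n\) formulas, since \(L\Pi_n\) is equivalent to \(I\Sigma_n\). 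This yields a rank-minimal vertex, and hence an \(\in\)-minimal set. The same bounded search also handles \(\Sigma_n\) uniformly.
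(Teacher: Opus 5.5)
The gap is in the Collection step. You assert that \(\IS{n}\) proves \(\BS{n+1}\), and this is false. By Paris and Kirby, \(\IS{n+1}\vdash\BS{n+1}\vdash\IS{n}\), and both implications are strict.

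This is not a harmless misattribution. In a model \(M\models\IS{n}+\neg\BS{n+1}\), the finite-graph interpretation built directly in \(M\) actually violates \(\Coll{\Sigma_{n+1}}\):
\begin{enumerate}
\item Take a failing instance of \(B\Pi_n\). That is, a \(\Pi_n^0\) formula \(\theta(i,y)\) and an \(a\) with \(\forall i<a\,\exists y\,\theta(i,y)\), but no common bound on the witnesses.
\item Transcribe \(\theta\) on finite von Neumann ordinals. Addition and multiplication are \(\Delta_1\) in the graph model, since their tables exist there, so this gives a \(\Pi_n\) set formula. Its Collection antecedent holds over the ordinal \(a\).
\item Every ordinal represented by a vertex of a graph with \(N_b\) vertices has value at most \(N_b\). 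So any collecting set would bound the witnesses by \(N_b\), contradicting the choice of \(\theta\).
\end{enumerate}

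The missing step is the one the paper uses. By \eqref{eq:ind-spine}, \(\IS{n}\) \emph{interprets} \(\BS{n+1}\). The graph construction should be carried out inside that interpreted model, and the two interpretations composed. Since \(\BS{n+1}\vdash\IS{n}+\expax\), your Power Set and Foundation arguments survive there unchanged, because they use only \(L\Sigma_n\), \(L\Pi_n\), \(\BS{n}\) and \(\expax\).

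With that repair the proposal is correct and otherwise close to the paper. Your \(\Pi_n\)-Foundation argument is the paper's: take the least height among the vertices of the given witness. You are more careful than the paper in noting that absorbing the bounded vertex quantifier into a \(\Pi_n^0\) formula needs \(\BS{n}\). Your global search for \(\Sigma_n\)-Foundation also works.

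Collecting all graphs coded below \(b\), instead of choosing a least witness for each \(i\), is a legitimate simplification. It avoids having to code a definable choice function as a finite object, which the paper leaves implicit. However, it needs \(\expax\), since the new graph has on the order of \(b\log b\) vertices. So your aside that this trick could replace the least-pair choice in the first \(\Sigma_1\)-Collection proof is wrong: that proof works in \(\BS1+\OmegaOne\), where exponentiation is not available.
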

\begin{proof}
  Continue to use finite directed acyclic graphs. An unbounded set-theoretic quantifier becomes an unbounded quantifier over finite graphs, whereas a bounded quantifier searches only through the finitely many immediate members of a root. Thus, if \(\varphi(x,y,\bar p)\) is \(\Sigma_{n+1}\), the assertion that \(Y\) represents a set satisfying \(\varphi(G[i],Y,\bar P)\) is still \(\Sigma_{n+1}^0\). Here \(G,i,Y,\bar P\) have the same meanings as in the first proof of this section. By \eqref{eq:ind-spine}, \(I\Sigma_n\) interprets \(B\Sigma_{n+1}\). The latter places all the finitely many graphs \(Y\) below one numerical bound. Choose the least such \(Y\) for each \(i\), take disjoint copies, and add a new root; the resulting set witnesses Collection. Since \(I\Sigma_n\) also proves the totality of exponentiation, the preceding construction of Power Set applies unchanged.

  Now fix a \(\Sigma_n\) or \(\Pi_n\) formula \(A(x,\bar p)\), where \(x\) denotes the set to be chosen and \(\bar p\) is a tuple of parameters. For every vertex \(i\) reachable from the root of the input graph \(G\), let \(h(i)\) be the greatest length of a downward path beginning at \(i\). Since the graph is finite and every edge points to a smaller vertex number, \(h\) is obtained by finite recursion. The assertion that there is a reachable vertex of height \(k\) satisfying \(A\) has the same \(\Sigma_n^0\) or \(\Pi_n^0\) complexity as \(A\); here \(k\) is the height, while all quantifiers over vertices and path lengths are bounded. The theory \(I\Sigma_n\) gives the least \(k\) for which the assertion holds, after which we choose the least numbered suitable vertex \(i\) at that height. Every member of \(i\) has height below \(k\), so none of them satisfies \(A\). This proves both \(\Sigma_n\)-Foundation and \(\Pi_n\)-Foundation.
\end{proof}

\begin{theorem}
  For every \(n\geq1\), \(\IS{n}\) interprets \(\Tzero+\Repl{\Sigma_{n+1}}\).
\end{theorem}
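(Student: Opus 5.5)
We use the same finite-graph interpretation of \(\Tzero\) as in the preceding theorem, in \(\IS{n}\). We verify each instance of \(\Sigma_{n+1}\)-Replacement directly, using the graph construction that gave \(\Sigma_{n+1}\)-Collection. We cannot simply cite \cref{lem:scheme-implications}: Collection does not yield Replacement of the same complexity, because the exact range must be separated out of a collecting set.

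Fix a \(\Sigma_{n+1}\) formula \(\varphi(x,y,\bar p)\), and suppose that in the interpretation every immediate member \(G[i]\) of the input graph \(G\) has exactly one \(\varphi\)-image up to \(\simeq\). As in the previous proof, the relation ``\(Y\) represents a set satisfying \(\varphi(G[i],Y,\bar P)\)'' is a \(\Sigma_{n+1}^0\) formula \(\Phi(i,Y)\). By \eqref{eq:ind-spine}, \(\IS{n}\) interprets \(\BS{n+1}\). Working in \(\BS{n+1}\) (or collection inside \(\IS{n}\) relative to the interpreted segment), we obtain a bound \(b\) such that each \(i<N_G\) with \(E_G(r_G,i)\) has a witness \(Y<b\) for \(\Phi(i,Y)\).

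The key step is to avoid selecting these witnesses. Minimizing over a \(\Sigma_{n+1}^0\) predicate would require \(\Sigma_{n+1}\)-induction, which we do not have. Instead we use uniqueness. Write \(\Phi(i,Y)\equiv\exists w\,\theta(i,Y,w)\) with \(\theta\in\Pi_n^0\), and apply \(\BS{n+1}\) to the pairs \((Y,w)\), so that both lie below \(b\). Now define
\[
 S=\{(i,Y): i\text{ an immediate member of }r_G,\ Y<b,\ \exists w<b\,\theta(i,Y,w)\}.
\]
This is a \(\Pi_n^0\) condition with bounded outer quantifier, hence \(\Delta_{n+1}^0\) relative to \(\BS{n+1}\). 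Bounded \(\Delta_{n+1}\)- (indeed \(\Pi_n\)-) comprehension, available in \(\IS{n}\) since bounded \(\Pi_n\) sets are coded there, gives a code for the finite set \(S\).

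Next we take the disjoint copies of all graphs \(Y\) with \((i,Y)\in S\) for some \(i\), and add a new root. Every such \(Y\) genuinely satisfies \(\varphi(G[i],Y,\bar P)\). By the uniqueness hypothesis, all the \(Y\) attached to the same \(i\) are bisimilar. The new root therefore has exactly the images \(F(G[i])\) as its members, so it represents the exact range \(b\) required by Replacement. The size bound is polynomial in \(N_G\) and the length of \(b\), as before, and exponentiation is available in \(\IS{n}\).

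We expect the main obstacle to be the coding step. We must check that \(\IS{n}\) really proves the existence of a code for a bounded set defined by a \(\Pi_n^0\) (or \(\Sigma_n^0\)) formula; this is the standard bounded-comprehension consequence of \(\IS{n}\) for \(\Sigma_n\)/\(\Pi_n\) formulas. We also need to handle the parameter \(\bar P\) and the interaction with the \(\BS{n+1}\) interpretation rather than \(\BS{n+1}\) itself. If this causes trouble, we will fall back on working entirely within \(\IS{n}\), using its provable \(\Sigma_{n+1}\)-collection on the standard cut when \(n\geq1\).
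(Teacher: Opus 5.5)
Your proof is correct. Its skeleton matches the paper's: the same finite-graph interpretation, a common bound from $\BS{n+1}$ obtained through $\IS{n}\Int\BS{n+1}$, disjoint copies under a new root, and single-valuedness to exclude extraneous members.

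The genuine difference is the selection step. The paper chooses, for each $i$, the least $Y$ below the bound. Read literally, this minimises the $\Sigma_{n+1}^0$ formula $W$, which in general requires $\IS{n+1}$. The step becomes legitimate only once the witness is bounded together with $Y$; the predicate is then $\Pi_n$, and the $\Pi_n$ least-number principle of $\IS{n}$ applies. You bound $(Y,w)$ explicitly and then avoid minimisation altogether. You code the $\Pi_n$ set $S$ by bounded comprehension and let uniqueness make the redundant copies harmless, which keeps the complexity bookkeeping transparent.

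The cost is size. $S$ may contain up to $N_G\cdot b$ pairs, since many codes below $b$ can name bisimilar graphs. So your remark that the new graph is polynomial in $N_G$ and the length of $b$ is false for your construction; it holds only for the paper's one-copy-per-member graph. This is harmless, because exponentiation is already total in $\IS{1}$.

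Finally, drop the fallback to ``$\Sigma_{n+1}$-collection on the standard cut''. $\IS{n}$ does not prove $\BS{n+1}$, and the standard cut is not definable. The composed interpretation through $\BS{n+1}$ is the route to use, and it suffices.
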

\begin{proof}
  Use the finite-graph interpretation from the preceding proof. Fix a \(\Sigma_{n+1}\) formula \(\varphi(x,y,\bar p)\), and suppose that it defines a single-valued total relation on a set \(a\). Let \(G\) represent \(a\), let \(i<N_G\) denote one of its immediate members, let \(Y\) be a candidate value, and let \(\bar P\) represent the parameters. The formula
  \[
    W(i,Y,\bar P)\longleftrightarrow
    Y\text{ is a finite directed acyclic graph and }
    \varphi(G[i],Y,\bar P)
  \]
  is \(\Sigma_{n+1}^0\), since the initial variable \(Y\) can be absorbed into the first existential block of a \(\Sigma_{n+1}^0\) formula. The principle \(B\Sigma_{n+1}\) provides a common bound for all relevant \(Y\). For each \(i\), choose the least \(Y\) below that bound, take disjoint copies of the resulting graphs, and add a new root. If \(Y'\) is another value for the same \(i\), single-valuedness gives \(Y\simeq Y'\). The members of the new root are therefore exactly the values of \(\varphi\) on \(a\), with no extraneous elements.
\end{proof}

\begin{theorem}
  For every \(n\geq1\), \(\IS{n}\) interprets
  \(\Tzero+\Inf+\Fnd{\Sigma_n}\) and
  \(\DB+\Inf+\Fnd{\Pi_n}\).
\end{theorem}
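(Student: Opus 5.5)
The plan is to reuse the levelled-term interpretation from the proof that \(\Q\) interprets \(\Tzero+\Inf\), now carried out in \(\IS{n}\). Since \(\IS{n}\) extends \(\IDelta+\OmegaOne\), that construction already gives \(\Tzero+\Inf\) and hence also \(\DB+\Inf\). It therefore only remains to verify the two Foundation schemes. The interpreted universe consists of triples \((r,w,Z)\) with \(w\in W_r\) and \(Z\) a correct table. Equality and membership are bounded once the tables are given, and the existence of a table is provable. Hence equality and membership are \(\Delta_1\) in \(\IS{n}\). Consequently, for a fixed \(\Sigma_n\) or \(\Pi_n\) set formula \(A(x,\bar p)\), its translation is \(\Sigma_n^0\) or \(\Pi_n^0\) respectively. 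Unbounded set quantifiers become numerical quantifiers over triples. Bounded set quantifiers become searches over terms at one lower level, which can be bounded because the members of a level-\((r+1)\) term are finitely listed at level \(r\) (plus the level-zero case \(u<v\)).

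For \(\Fnd{\Sigma_n}\) I would mimic the height argument for finite graphs, using the ``first level'' of an object in place of graph height. Suppose some object satisfies \(A\). The assertion ``there is an object of first level \(k\) satisfying \(A\)'' is \(\Sigma_n^0\) in \(k\), since the first-level condition is \(\Delta_1\) given tables. By the least number principle for \(\Sigma_n\) available in \(\IS{n}\), take the least such \(k\). If \(k=0\), the objects are natural numbers with \(\in\) equal to \(<\), so least number for \(\Sigma_n\) picks the least natural number \(m\) satisfying \(A\). Its members are the numbers below \(m\), and none of them satisfies \(A\). If \(k>0\), pick any object \(x\) of first level \(k\) satisfying \(A\). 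I then have to argue that no member of \(x\) satisfies \(A\).

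This is the main obstacle, because members of \(x\) have first level \emph{at most} \(k-1\) only if the interpretation's set-forming terms never put at level zero members of higher-level sets other than numbers. In fact the universal-ish set \(\Omega\) contains all level-zero terms, so a level-one set can have as members natural numbers of arbitrary size. My remedy is a rank-like measure: order objects lexicographically by (first level, and at level zero the numerical value), noting that membership strictly decreases this pair. Members of a level-\((k)\) object with \(k>0\) lie at level \(<k\), and level-zero membership decreases the number. Minimizing a \(\Sigma_n^0\) property over this \(\omega^2\)-like order is done in two stages: least \(k\) first, then, if \(k=0\), least number. Both steps use only the \(\Sigma_n\) least number principle.

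For \(\Fnd{\Pi_n}\) the same argument works, since \(\IS{n}\) proves the least number principle for \(\Pi_n\) formulas as well (\(\IS{n}\dashv\vdash I\Pi_n\) and \(L\Pi_n\) follows). So why does the statement pair \(\Pi_n\) only with \(\DB\)? My guess is that the \(\Pi_n\) case needs the witnessing set description without \(\TCl\) or \(R_8\), whose evaluation introduces an extra existential over tables and pushes complexity to \(\Sigma_{n+1}\). I would therefore use, for the second theory, a restricted term system with only the \(\DB\) operations. I would then check carefully that the translation of a \(\Pi_n\) formula stays \(\Pi_n^0\) there; that complexity bookkeeping is where I expect real care to be needed.
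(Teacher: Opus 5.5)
Your \(\Sigma_n\)-Foundation argument is essentially the paper's. You take the least level \(k\) containing a term satisfying \(A\); the existential over terms is absorbed into the leading \(\Sigma_n\) block. At level zero you take the least number, and you use that members of a level-\(k\) term are represented at level \(k-1\). The worry about \(\Omega\) is harmless, since its members are level-zero terms.

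Two small corrections. First, minimize ``level \(k\) contains a term satisfying \(A\)'' rather than ``first level \(k\)''. The latter hides a universal quantifier over the infinitely many terms of lower levels and is not evidently \(\Delta_1\), which would matter when \(n=1\). Second, bounded set quantifiers are not harmless because members are ``finitely listed''; \(\Omega\) refutes that, as you note yourself. They are harmless because each term has only finitely many exceptional positions and is otherwise governed by a fixed definition.

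The genuine gap is the \(\Pi_n\) case. Your claim that ``the same argument works'' because \(\IS{n}\) proves the \(\Pi_n\) least-number principle is false. The formula you minimize in \(k\) is \(\exists x\,(x\text{ lies at level }k\wedge A(x,\bar p))\). For \(A\in\Pi_n\) this is \(\Sigma_{n+1}^0\), because every level is infinite (already \(W_0=\N\)), so the existential over terms cannot be bounded. Choosing the least such \(k\) is an instance of the \(\Sigma_{n+1}\) least-number principle, which is equivalent to \(\IS{n+1}\) and not available in \(\IS{n}\). This is exactly the paper's subsequent ``one-level shift'' theorem.

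For \(\Tzero+\Inf\) this cannot be repaired. The paper shows that \(\Tzero+\Inf+\Fnd{\Pi_n}\) interprets \(\IS{n+1}\), using \(R_8\) to form the rows \(\{C(s(j),j,u,q_j):u\in\omega\}\). An interpretation of that theory in \(\IS{n}\) would therefore contradict the strictness of the arithmetic hierarchy.

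Your explanation of why \(\DB\) appears is also off. The extra quantifier does not come from translating \(A\); you have yourself argued that equality and membership are \(\Delta_1\). It comes from the unbounded \(\exists x\) in front of \(A\) in the formula being minimized. So deleting \(R_8\) and \(\TCl\) from the level-by-level structure and rechecking translation complexity changes nothing.

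What the paper does instead for \(\DB+\Inf\) is abandon the level structure. Each set is a finite expression built from \(\omega\) by Empty Set, Pairing, Set Difference, Union and Cartesian Product, listing only finitely many exceptional natural-number positions. These are arranged so that membership strictly decreases the pair (stage at which the expression was formed, natural-number position). \(\IS{n}\) then picks the earliest expression representing a set satisfying \(A\), and the least position within it, so that no member of the chosen set satisfies \(A\). Dropping \(R_8\) is what makes such a representation possible; it is not about lowering translation complexity. Your proposal contains no substitute for this step.
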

\begin{proof}
  We first treat \(\Sigma_n\)-Foundation. In \(I\Sigma_n\), the level-by-level definition of terms beginning with \(W_0=\omega\) can be carried out simultaneously through all internally finite levels; the finite calculation need no longer be attached separately to each object. Equality and membership are still given by the fixed definitions between adjacent levels, so Empty Set, Pairing, Set Difference, Union, Cartesian Product, and \(R_8\) all exist. If \(x\) first appears at level \(r\), applying \(z\mapsto z\cup\bigcup z\) exactly \(r\) times gives the least transitive set containing \(x\). The structure therefore satisfies \(\Tzero+\Inf\).

  Let \(A(x,\bar p)\) be \(\Sigma_n\). The assertion that level \(r\) contains a term satisfying \(A\) is still \(\Sigma_n^0\); here \(r\) is the level, \(x\) is the set represented by a term, and \(\bar p\) is the tuple of parameters. Choose the least such \(r\), and then the least suitable term at that level. Every member of the chosen set can be represented at a lower level, so the chosen set is \(A\)-minimal.

  For \(\Pi_n\)-Foundation, we omit \(R_8\) and retain only Empty Set, Pairing, Set Difference, Union, and Cartesian Product. Each set is represented by a finite expression. Besides its operation symbols and parameters introduced earlier, an expression lists only finitely many natural-number positions that require separate treatment; membership at every other position is determined by the fixed definition of the operation. These finite lists may be arranged so that every member of a set either comes from an earlier expression or has a smaller natural-number position. Membership therefore strictly decreases a pair of natural numbers, ordered first by the stage at which the expression was formed and then by its natural-number position.

  For any fixed \(\Pi_n\) formula \(A(x,\bar p)\), recursion on the formula shows that its arithmetical translation remains \(\Pi_n^0\). Bounded set quantifiers inspect only finite lists and fixed definitions, and hence add no alternation of unbounded quantifiers. If some set satisfies \(A\), the theory \(I\Sigma_n\) first chooses the earliest expression representing such a set and then the least natural-number position within it. Every member of the resulting set decreases the ordered pair just described, so no member still satisfies \(A\). The zeroth level again supplies \(\omega\), and the structure satisfies \(\DB+\Inf+\Fnd{\Pi_n}\).
\end{proof}

There is a one-level shift for \(\Tzero+\Inf+\Fnd{\Pi_n}\).
\begin{theorem}
  For every \(n\geq1\), \(\IS{n+1}\) interprets \(\Tzero+\Inf+\Fnd{\Pi_n}\).
\end{theorem}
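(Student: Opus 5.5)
We reuse the level-by-level term interpretation from the preceding theorem, now carried out inside \(\IS{n+1}\). It begins with \(W_0=\omega\), has all internally finite levels built simultaneously, and includes the full list of operations for \(\GJ\), including \(R_8\). The proof of that theorem already verified Extensionality, the elementary operations, \(\Delta_0\)-Separation, \(\TCl\) (by applying \(z\mapsto z\cup\bigcup z\) as many times as the first level of \(x\)), and \(\Inf\) (the zeroth level supplies \(\omega\)). These verifications go through verbatim in the stronger theory \(\IS{n+1}\supseteq\IS{n}\). So the structure satisfies \(\Tzero+\Inf\), and only \(\Fnd{\Pi_n}\) remains.

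Fix a \(\Pi_n\) formula \(A(x,\bar p)\). We translate it into arithmetic as in the earlier proofs. Unbounded set quantifiers become unbounded quantifiers over terms. Each atomic formula \(u=v\) or \(u\in v\) is reduced to level-zero facts by iterating the fixed definitions between adjacent levels. Bounded set quantifiers search through the members of a term.

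This is where the earlier argument breaks down for \(\Pi_n\). Because of \(R_8\), deciding membership and equality between terms, and enumerating the members of a term at level \(r\), involves a recursion through \(r\) levels in which members at level zero range over all of \(\omega\). The natural arithmetical definition of \(u\in_r v\) is therefore only \(\Delta_1\) relative to a computation table of unbounded size, that is, \(\Sigma_1\) and \(\Pi_1\) over the existence of such a table. After absorbing this extra block, the translation of \(A\) has complexity at most \(\Pi_{n+1}^0\); equivalently, its negation is \(\Sigma_{n+1}^0\). The main obstacle is to check this bound carefully. The key point is that the existential quantifier asserting the existence of the table can be placed in the innermost position and merged with the adjacent block. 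Only in the case of a \(\Pi\)-type outer block does it genuinely add a new level, which is where the one-level shift comes from.

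With this bound, we argue as in the \(\Sigma_n\) case. Assume some term satisfies \(A\). The set of levels \(r\) at which some term satisfies \(A\) is defined by a \(\Sigma_{n+1}^0\) formula, namely \(\exists w\in W_r\) followed by the \(\Pi_{n+1}^0\) translation. The least-number principle for \(\Sigma_{n+1}\) formulas holds in \(\IS{n+1}\), so it gives a least such level \(r\). At that level the least-number principle again gives the least suitable term \(w\), since \(\IS{n+1}\) proves \(L\Pi_{n+1}\)-style minimization on the inner formula as well. Every member of the set represented by \(w\) is represented at a level below \(r\) when \(r>0\). When \(r=0\), every member is a smaller natural number, which the minimality of \(w\) excludes. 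By minimality of \(r\) (respectively of \(w\)), no member satisfies \(A\). This yields \(\Fnd{\Pi_n}\), and hence the structure satisfies \(\Tzero+\Inf+\Fnd{\Pi_n}\) in \(\IS{n+1}\).
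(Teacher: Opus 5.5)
Your overall plan matches the paper's: minimize the level, then pick a term at that level whose members live at lower levels. Treating level zero by taking the least natural number is a detail the paper glosses over. The gap is in the complexity count, which is the entire content of this theorem. You claim that the translation \(A^*\) of a \(\Pi_n\) formula is \(\Pi_{n+1}^0\), and then that \(\exists w\,(w\in W_r\wedge A^*(w))\) is \(\Sigma_{n+1}^0\). But \(W_0=\omega\), so every \(W_r\) is infinite and \(\exists w\in W_r\) is an unbounded number quantifier. In front of a \(\Pi_{n+1}^0\) formula it yields \(\Sigma_{n+2}^0\). Minimizing that requires the \(\Sigma_{n+2}^0\) least-number principle, i.e.\ \(\IS{n+2}\). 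So with your own bound, the least level \(r\) cannot be chosen in \(\IS{n+1}\). Minimizing a single numerical code of the pair \((r,w)\) does not rescue this, because a level-\(r\) term such as \(\Omega\) has members with arbitrarily large codes.

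What you need is for \(A^*\) to stay \(\Pi_n^0\), and your own ``key point'' gives exactly this rather than an extra level. Membership and equality are defined by an existential over a computation table, and equivalently by a universal over such tables. So for \(n\geq1\) the matrix can be written in \(\Sigma_1\) or \(\Pi_1\) form to match the innermost block, whichever polarity it has, and absorbed without adding an alternation. You also need the paper's observation that a fixed \(\Delta_0\) formula is evaluated by a finite table. As you describe it, a bounded set quantifier over a term with infinitely many level-zero members would become an unbounded arithmetical quantifier and raise the complexity further. With \(A^*\in\Pi_n^0\), the level predicate is \(\Sigma_{n+1}^0\) and \(\IS{n+1}\) minimizes it. In the paper, the one-level shift comes precisely from this outer \(\exists w\) over terms at a level. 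Against a \(\Sigma_n\) formula it merges into the leading existential block, which is why \(\IS{n}\) suffices for \(\Fnd{\Sigma_n}\). Against a \(\Pi_n\) formula it adds a new block and produces \(\Sigma_{n+1}^0\). The computation tables play no role in the shift.
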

\begin{proof}
  Use the level-by-level term structure over \(\omega\) from the preceding proof. The verification of \(\Tzero+\Inf\) is unchanged. Fix a \(\Pi_n\) formula \(A(x,\bar p)\), where \(x\) is a set variable and \(\bar p\) is a tuple of parameters. To ask whether a level contains a set satisfying \(A\), one must first existentially quantify over a term at that level. Consequently
  \[
    \exists x\bigl(x\text{ appears at level }r\text{ and }A(x,\bar p)\bigr)
  \]
  is a \(\Sigma_{n+1}^0\) formula whose only free variable \(r\) denotes the level. The theory \(I\Sigma_{n+1}\) chooses the least \(r\) for which it holds, and then a term at that level satisfying \(A\). Every member of that term appears at a lower level. If one of those members also satisfied \(A\), the minimality of \(r\) would be contradicted. The chosen term is therefore \(A\)-minimal.
\end{proof}

Realising the same constructions in \(\PA\) gives the uniform endpoint.
\begin{theorem}
  \(\PA\) interprets the following theories:
  \begin{enumerate}
    \item \(\Tzero+\Pow+\mathsf{Coll}+\mathsf{Foundation}\);
    \item \(\Tzero+\Inf+\mathsf{Foundation}\).
  \end{enumerate}
\end{theorem}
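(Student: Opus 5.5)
Both parts will come from the constructions already used for \(\IS{n}\), now carried out inside \(\PA\). The key point is uniformity. A single set-theoretic instance of Collection or Foundation involves one fixed formula \(\varphi\) or \(A\), and that formula is \(\Sigma_m\) or \(\Pi_m\) for some fixed \(m\). Its arithmetical translation is then a fixed \(\Sigma^0_m\), \(\Pi^0_m\) or \(\Sigma^0_{m+1}\) formula, and \(\PA\) contains the whole of \(I\Sigma_{m+1}\). So each axiom of the target theory is verified by the argument given earlier for a suitable finite level. The interpreting formulas must not depend on \(m\). This holds because the domain, the equality \(\simeq\) and the membership \(\in^*\) are the same parameter-free formulas in every earlier proof; only the verification of each axiom varies.

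For item (1), the interpretation is the finite-graph one: rooted finite DAGs, equality given by bisimulation, and \(\in^*\) given by immediate members. The verification of \(\Tzero\) is the one given for \(\IDelta+\OmegaOne\), and \(\PA\) extends that theory. Power Set holds because \(\PA\) proves exponentiation total, so the \(2^s\)-subset construction applies. For an instance \(\Coll{\varphi}\) with \(\varphi\in\Sigma_{n+1}\), I would follow the \(\IS{n}\) theorem. The witness predicate \(W(i,Y,\bar P)\) is \(\Sigma^0_{n+1}\). Because \(\PA\vdash B\Sigma_{n+1}\), there is a common bound on the witnesses. Taking the least witness below that bound for each \(i\) and adding a new root gives the collecting set. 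For Foundation on an arbitrary formula \(A\), I would use the height-function argument. The statement ``some reachable vertex of height \(k\) satisfies \(A\)'' has the same complexity as \(A\). The least-number principle for that complexity, which \(\PA\) has, gives a least \(k\), and a vertex of that height is \(A\)-minimal. One point needs care. Foundation in the statement is presumably the scheme over the whole universe, not only inside a given graph. So I would first take an arbitrary graph \(G\) satisfying \(A\) and minimise over heights of graphs overall. This is done by taking the least \(k\) such that some graph of rank \(k\) satisfies \(A\). Every member of a graph of rank \(k\) has rank \(<k\), and membership respects \(\simeq\), so any witness of least rank is \(A\)-minimal. The rank of a graph is \(\Delta_0\)-computable from the graph, so this statement has the complexity of \(A\) plus one existential quantifier. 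That is still within \(\PA\).

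For item (2), I would use the level-by-level term structure over \(\omega\) from the \(\IS{n}\) theorem for \(\Tzero+\Inf+\Fnd{\Sigma_n}\). In \(\PA\) this structure is defined through all internally finite levels, and the same verification gives \(\Tzero+\Inf\), including \(\TCl\). For Foundation on an arbitrary fixed \(A\), I would use the argument of the \(\IS{n+1}\) theorem for \(\Pi_n\)-Foundation. The formula ``level \(r\) contains a term satisfying \(A\)'' is arithmetical of some fixed complexity, so \(\PA\) finds the least such \(r\). Within level \(r\) it then picks either the least natural number, at level zero, or any suitable term. Every member of a term at level \(r>0\) is represented at a lower level. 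At level \(0\), membership is \(<\), so minimality follows from the least-number principle.

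I expect the main obstacle to be the level-\(0\) case together with how levels interact. A term at level \(r\) may equal a term first appearing at a lower level, and the claim that every member of a level-\((r+1)\) term is represented at level \(r\) has to be checked for the universal-type operation \(\Omega\). If membership in \(\Omega\) reaches level-zero terms, this is consistent with that claim. To avoid the issue I would minimise over the first level of a set rather than over an arbitrary representing level, exactly as in the Set Foundation argument of the \(\IDelta+\OmegaOne\) construction. Once that is in place, the rest is a uniform bookkeeping check that each translated formula has fixed arithmetical complexity.
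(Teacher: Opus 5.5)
Your proposal is correct and follows essentially the same route as the paper. For (1) you use finite graphs with exponentiation, bounding, and the least-number principle on heights; for (2) you use the level-by-level term structure over \(\omega\), minimising over the first level. In both cases you work inside \(\PA\) with the interpreting formulas fixed in advance. Your global minimisation over graph ranks is a harmless variant of the paper's choice of a least-height reachable vertex \(i\) inside a given witness \(G\): that local choice already gives a globally \(A\)-minimal set, because every member of \(G[i]\) is represented by a child of \(i\) of smaller height.
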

\begin{proof}
  For the first theory, use finite directed acyclic graphs. The theory \(\PA\) proves that exponentiation is total, so Power Set exists. The arithmetical translation of any fixed Collection formula has only finitely many quantifier alternations. The corresponding bounding principle in \(\PA\) places all finitely many witnesses below one number, after which their graphs can be copied and placed below a new root as before. Likewise, the translation of any fixed Foundation formula has finite complexity. Among the reachable vertices satisfying it, choose first one of least height and then one of least number. The same interpreting formulas therefore satisfy every instance of Collection and Foundation.

  For the second theory, use the level-by-level term structure over \(\omega\). For any fixed formula, \(\PA\) can carry out its truth definition and choose the first level containing a set that satisfies it. All members of the chosen set occur at lower levels, so every instance of Foundation holds. The successive-union construction above gives the least transitive set containing any given set, and the set \(\Omega\) above level zero gives Strong Infinity.
\end{proof}

\section{Interpreting Arithmetic in Set Theory}

We now turn to the converse direction. Every set theory considered in this section can interpret adjunctive set theory with Extensionality, and hence can interpret \(\Q\); see \citep{CollinsHalpern1970,Damnjanovic2017,Visser2008}. The real work is to obtain exponentiation, induction, and bounding.

\begin{theorem}
  \(\ReSzero+\Pow\) interprets \(\BS1+\expax\).
\end{theorem}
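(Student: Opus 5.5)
By the spine \eqref{eq:exp-spine}, it suffices to show that \(\ReSzero+\Pow\) interprets \(\IDelta+\expax\). Even this is too much to ask of a definable class of all von Neumann naturals directly, since without Infinity or Foundation such a class can contain nonstandard "numbers" that fail induction. The plan is therefore the usual shortening strategy. Start with the class \(\mathsf{Nat}\) of finite von Neumann ordinals, where "\(x\) is a finite ordinal" means: \(x\) is transitive, linearly ordered by \(\in\), and every nonempty subset of \(x\cup\{x\}\) has an \(\in\)-least and an \(\in\)-greatest element. This formula is \(\Delta_0\), using Power Set to make "every subset" bounded by \(\mathcal P(x\cup\{x\})\). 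On this class, define successor \(x\cup\{x\}\), order \(\in\), and addition and multiplication by the existence of a recursion sequence (a function from \(y\cup\{y\}\) satisfying the recursion clauses).

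First key step: show that \(\mathsf{Nat}\) satisfies \(\Delta_0\)-induction in the set-theoretic sense. Given \(n\in\mathsf{Nat}\) and a \(\Delta_0\) property, \(\Sep{\Delta_0}\) forms \(\{m\in n\cup\{n\}:\neg A(m)\}\). This is a subset of \(n\cup\{n\}\), so if it is nonempty it has an \(\in\)-least element, giving the least-number principle for \(\Delta_0\) formulas with set parameters.

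Second step: show that the existence of recursion functions is preserved under successor, using Pairing and Union to extend a function by one pair. Uniqueness follows by the least-number principle above. Addition and multiplication need to be total, and we should not expect totality on all of \(\mathsf{Nat}\). We therefore pass, in Solovay style, to a definable cut \(J\subseteq\mathsf{Nat}\) that is closed under successor, \(+\), \(\times\), and downward closed. On \(J\) the recursion-based graphs are \(\Delta_0\)-defined with the relevant function as a set, so every arithmetical \(\Delta_0\) formula translates into a formula that is \(\Delta_0\) in the recursion sets. These sets can be bounded using Power Set, since a function from \(y\cup\{y\}\) to \(J\) lies in \(\mathcal P((y\cup\{y\})\times z)\) for suitable \(z\). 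Cartesian products exist here because Power Set and Pairing give \(a\times b\subseteq\mathcal P\mathcal P(a\cup b)\). Hence \(I\Delta_0\) holds on \(J\).

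Third step, exponentiation, which I expect to be the main obstacle. The idea is to identify \(2^n\) with \(\mathcal P(n)\) and its cardinality. I would define \(2^n=m\) by the existence of a bijection between \(m\) and \(\mathcal P(n)\), and show by induction on \(n\) (Δ₀ in a bijection parameter) that such a bijection to some ordinal exists. The inductive step doubles: \(\mathcal P(n\cup\{n\})\) splits into \(\mathcal P(n)\) and \(\{s\cup\{n\}:s\in\mathcal P(n)\}\), the second piece being obtained by separation from \(\mathcal P(n\cup\{n\})\), and a bijection to \(m+m\) is built from the old one. The difficulty is that \(m+m\) must still lie in the cut, so closure of the cut under exponentiation must be arranged. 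Shortening only preserves the operations that are already total, so I would first do the shortening on a cut closed under \(x\mapsto 2^x\) as just defined. Concretely, define \(J_0=\{n: \forall k\le n\ (2^k \text{ exists in }\mathsf{Nat})\}\) and shorten further to obtain closure under \(+,\times\) and \(n\mapsto 2^n\). The delicate point is proving that \(n\in J_0\) implies \(2^n\in J_0\). I would try the standard trick of taking \(J=\{n:\forall k\,(k\in J_0\to k+2^n\text{-type iterates exist})\}\), or else reduce to an \(\expax\)-closed cut via Pudlák's technique of cuts closed under \(\omega\)-like operations. I expect this closure argument for the exponential to require the most care, with cardinal counting through the power set being where \(\Pow\) is genuinely used. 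The \(\Sigma_1\)-bounding is then supplied by \eqref{eq:exp-spine}.
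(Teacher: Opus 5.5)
\textbf{Gap: exponentiation.} The gap is at exponentiation, where you expect it, and it comes from insisting that numbers be von Neumann ordinals. Power Set gives you the set $\mathcal P(n)$, but nothing in $\ReSzero+\Pow$ gives an ordinal equinumerous with it.

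Concretely, let $M$ be a nonstandard model of $\mathsf{ZF}$ with Infinity replaced by its negation, let $c$ be a nonstandard ordinal of $M$, and put $N=\bigcup_{k\in\N}V^M_{c+k}$. This transitive class is closed under pairing, union, difference, $\Delta_0$-Separation and power set, so $N\models\ReSzero+\Pow$. Its finite ordinals, however, are exactly the ordinals of $M$ below $c+k$ for standard $k$. Hence your $J_0$ is $\{n:2^n<c+k\text{ for some standard }k\}$. Take $n$ with $2^{n}\le c<2^{n+1}$: then $n\in J_0$ but $2^n\notin J_0$. So the ``delicate point'' is simply false for $J_0$ as you define it, and everything rests on a further shortening that you leave unspecified.

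The remedies you mention cannot supply that shortening by themselves. Your $J_0$ has only the features that the domain of the bounded-definable exponential has in an arbitrary model of $\IDelta$: it is closed under successor and carries a recursively defined graph. A Solovay/Pudl\'ak shortening that uses only such features would run verbatim in $\IDelta$ and yield $\IDelta\Int\IDelta+\expax$. That contradicts the strictness of the spine, since $\IDelta\Mint\Q$. Any repair must therefore use $\Pow$ to manufacture large \emph{numbers}. In your representation, $\Pow$ manufactures large sets but not large ordinals. Your steps one and two are essentially fine, but they only reach the $\Q$ level and never use $\Pow$ essentially.

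\textbf{The missing idea (the paper's route).} Let $\mathcal P(A)$ itself be the number. Interpret a number as a pair $(A,L)$, where $L$ linearly orders $A$ and every nonempty subset of $A$ has an $L$-least and an $L$-greatest element. Identify pairs up to bijection, and let $\le$ mean injection. Bijections and injections range over $\mathcal P(A\times B)$, and $A\times B$ is cut out of $\mathcal P\mathcal P(A\cup B)$ by $\Delta_0$-Separation. Then $+$, $\times$ and $2^x$ are tagged disjoint union, Cartesian product and $\{0,1\}^B$. All three are total outright, so no cut is needed anywhere.

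The real lemmas are:
\begin{enumerate}
\item $\mathcal P(A)$, ordered by comparing at the $L$-greatest point of the symmetric difference, again has the least/greatest-element property;
\item such sets are Dedekind finite;
\item any two are comparable by an injection, obtained as the union of all isomorphisms between initial segments.
\end{enumerate}
Together these make bijection an equivalence and injection a discrete linear order. Bounded induction follows by rewriting $\exists[X]<[B]$ as a quantifier over proper subsets $X\in\mathcal P(B)$, then applying $\Delta_0$-Separation and the least-element property to the set of bad endpoints of $A$. This gives $\IDelta+\expax$, and \eqref{eq:exp-spine} finishes. Your remark about identifying $2^n$ with the cardinality of $\mathcal P(n)$ was the right instinct; you only had to stop asking for an ordinal on the other side.
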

\begin{proof}
  We first show that Cartesian products exist in the present theory. Given sets \(a,b\), put \(c=a\cup b\). Every Kuratowski pair \(\langle x,y\rangle\), with \(x\in a\) and \(y\in b\), belongs to \(\mathcal P(\mathcal P(c))\). Hence \(\Delta_0\)-Separation gives
  \[
  a\times b=\{z\in\mathcal P(\mathcal P(c)):
  \exists x\in a\,\exists y\in b\,z=\langle x,y\rangle\}.
  \]
  Here \(z\) is a candidate ordered pair, and \(x,y\) are its two components. Every quantifier in the displayed formula is bounded by a set already given.

  An interpreted natural number is a pair \((A,L)\), where \(L\subseteq A\times A\) linearly orders \(A\), and every nonempty subset of \(A\) has both an \(L\)-least and an \(L\)-greatest element. Two pairs \((A,L)\) and \((B,K)\) are equal in the interpretation if there is a bijection \(f:A\to B\); the function \(f\) is selected from \(\mathcal P(A\times B)\). Similarly, \((A,L)\leq(B,K)\) means that there is an injection from \(A\) into \(B\). These definitions depend only on the sizes of \(A\) and \(B\), and are therefore well defined on the quotient by bijection.

  We first verify closure under the operations used below. A subset inherits the original order. A tagged disjoint union is ordered block by block, and a Cartesian product lexicographically. On \(\mathcal P(A)\), define, for subsets \(X,Y\subseteq A\),
  \[
  X<_{L}^{\rm lex}Y
  \quad\longleftrightarrow\quad
  X\neq Y\ \text{ and }\ \max_L(X\mathbin\triangle Y)\in Y.
  \]
  The free variables \(X,Y\) are subsets of \(A\), and \(X\mathbin\triangle Y\) is their symmetric difference. Thus, at the last position where the two subsets differ, \(Y\) has value one. Since every nonempty subset of \(A\) has an \(L\)-greatest element, this defines a linear order.

  Let \(C\) be a nonempty subset of \(\mathcal P(A)\). Starting from the greatest position of \(A\) and moving backwards, retain the members of \(C\) that have value zero at the current position whenever any remain; otherwise retain those with value one. Power Set provides the set of all partial functions arising in this procedure, and the union of compatible partial functions completes the successive choices. The unique subset that remains is the least member of \(C\). Reversing the choices of zero and one gives its greatest member. Thus \(\mathcal P(A)\) has the same required property. In particular, for such \(A,B\), the function set
  \[
  A^B=\{f\in\mathcal P(B\times A):f\text{ is a function from }B\text{ to }A\}
  \]
  exists by \(\Delta_0\)-Separation and inherits the preceding order.

  Next, every such \(A\) is Dedekind finite. Suppose that \(f:A\to A\) is injective but not surjective. Among the closed initial segments
  \(I_a=\{x\in A:x\leq_La\}\) that admit a nonsurjective self-injection, choose one whose endpoint \(a\) is least. If \(f(a)=a\), restrict \(f\) to the preceding initial segment. If \(a\) is not in the range of \(f\), remove \(a\) from the domain. In the remaining case, choose \(p\neq a\) with \(f(p)=a\), and replace the arrow \(p\mapsto a\) by \(p\mapsto f(a)\) before removing \(a\). In every case one obtains a nonsurjective self-injection of a proper initial segment, contradicting the minimality of \(a\).

  Any two such sets can also be compared by injections. Inside \(\mathcal P(A\times B)\), take all functions that are order isomorphisms between an initial segment of \(A\) and an initial segment of \(B\). Any two of them agree on their common domain; otherwise the first point of disagreement gives an immediate contradiction. Their union is again an initial-segment isomorphism. If neither side is exhausted, extend it by pairing the least unused elements. Thus at least one side is exhausted, yielding an injection \(A\to B\) or \(B\to A\). If injections exist in both directions, their composite is a self-injection of \(A\). Dedekind finiteness makes the composite surjective, and the original injections are consequently bijections.

  Interpret zero by the size of the empty set, successor by tagged disjoint union with a singleton, and define
  \[
  \begin{aligned}
  |A|+|B|&=|(A\times\{0\})\cup(B\times\{1\})|,\\
  |A|\cdot|B|&=|A\times B|,\\
  2^{|B|}&=|\{0,1\}^{B}|.
  \end{aligned}
  \]
  The variables \(A,B\) denote ordered sets of the kind just described. The three equations use, respectively, tagged disjoint union, Cartesian product, and the set of functions from \(B\) to \(\{0,1\}\). The usual coordinate permutations and associativity maps prove the semiring axioms. Comparability gives a linear order, and adjoining one point outside the range of an injection gives discreteness. The quotient structure is therefore a discretely ordered nonnegative semiring, and exponentiation satisfies its recursive equations.

  It remains to prove bounded induction. Let \(\theta\) be any formula invariant under bijections, and let \((B,K)\) represent its numerical bound. Then
  \[
  \exists [X]<[B]\,\theta([X])
  \quad\longleftrightarrow\quad
  \exists X\in\mathcal P(B)\,
  \bigl(X\subsetneq B\wedge\theta([X,K\!\upharpoonright X])\bigr).
  \]
  On the left, the free variable \([X]\) denotes an interpreted natural number below \([B]\); on the right, \(X\) is a proper subset of \(B\). From left to right take the range of an injection, and from right to left take the inclusion map. Every bounded arithmetical quantifier is therefore translated into a set-theoretic quantifier bounded by a power set.

  If an instance of bounded induction failed at \((A,L)\), use Separation on \(A\) to form the set of endpoints at which the corresponding closed initial segment fails the induction conclusion, and choose its \(L\)-least member \(a\). It cannot be the first element. If \(p\) is its immediate predecessor, then the size of \(I_a\) is the successor of the size of \(I_p\), and the induction hypothesis together with the minimality of \(a\) gives a contradiction. Thus the interpreted structure satisfies \(I\Delta_0+\expax\), and by \eqref{eq:exp-spine} it interprets \(\BS1+\expax\).
\end{proof}

We next interpret \(\IS{n}\) in set theory. We first fix the natural-number domain. For sets \(a,b\), let
\[
\operatorname{Succ}(a,b)\longleftrightarrow
a\in b\wedge\forall z\in b\,(z\in a\vee z=a)
\wedge\forall z\in a\,(z\in b).
\]
The variable \(a\) denotes the predecessor and \(b\) the successor; the formula says exactly that \(b=a\cup\{a\}\). Let \(N(a)\) assert that \(a\) is empty, or that \(a\) is transitive, linearly ordered by membership, and that \(a\) and each of its nonempty members have an immediate predecessor satisfying the displayed formula. This definition contains only bounded quantifiers. Pairing and Union make \(a\mapsto a\cup\{a\}\) total. The relevant Foundation scheme makes predecessors unique and ensures that any two sets satisfying \(N\) are comparable by membership.

Addition and multiplication need not themselves exist as sets at the outset. Given \(a,b,c\) satisfying \(N\), say that a set \(f\) is an addition table beginning at \(a\), of length \(b\), and ending at \(c\) if \(f\) is single-valued on \(b\cup\{b\}\), \(f(0)=a\), \(f(b)=c\), and consecutive values are related by \(\operatorname{Succ}\). Once \(f\) is given, every quantifier is bounded by \(f\), \(b\), or \(\bigcup\bigcup f\), so the condition is \(\Delta_0\). A multiplication table begins with zero and, between consecutive values, includes an addition table showing that the later value is obtained from the earlier one by adding \(a\). This condition also contains only bounded quantifiers.

\begin{theorem}
  For every \(n\geq1\), both \(\ReSzero+\Fnd{\Sigma_n}\) and
  \(\ReSzero+\Fnd{\Pi_n}\) interpret \(\IS{n}\).
\end{theorem}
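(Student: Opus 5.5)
We interpret \(\IS{n}\) by taking the domain to be the sets satisfying \(N\), with zero the empty set and successor \(a\mapsto a\cup\{a\}\). Addition is the relation \(a+b=c\) iff there exists an addition table beginning at \(a\), of length \(b\), and ending at \(c\). Multiplication \(a\cdot b=c\) is defined analogously via multiplication tables. The order is \(a\leq b\) iff \(a\in b\vee a=b\). Each of these relations is \(\Sigma_1\) in the Lévy sense, since it has one unbounded existential quantifier over a table followed by a \(\Delta_0\) matrix. Throughout we use that \(\ReSzero+\Fnd{\Sigma_n}\) and \(\ReSzero+\Fnd{\Pi_n}\) each contain \(\Fnd{\Delta_0}\), i.e.\ \(\SetFnd\), which gives the facts recorded before the statement: predecessors are unique and \(N\)-sets are linearly ordered by \(\in\).

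\textbf{Step 1: recover \(\Q\)-level facts.} We show that every \(a\in N\) is transitive with all members in \(N\), so that \(<\) coincides with \(\in\) and is discrete. This uses only \(\Delta_0\)-Foundation applied to the \(\Delta_0\) property ``\(x\in a\) and \(\neg N(x)\)''.

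\textbf{Step 2: totality and uniqueness of the tables.} We prove by \(\in\)-induction on \(b\) that for all \(a,b\in N\) an addition table of length \(b\) starting at \(a\) exists and is unique. Uniqueness is \(\Delta_0\): compare two tables and take the \(\in\)-least point of disagreement by \(\Fnd{\Delta_0}\). Existence is the delicate part. If some \(b\) has no table, the property ``\(N(b)\wedge\neg\exists f\,(f \text{ is a table of length } b)\)'' is \(\Pi_1\), so its minimal counterexample is available directly under \(\Fnd{\Pi_n}\). Under \(\Fnd{\Sigma_n}\) with \(n=1\), I would instead apply foundation to the \(\Sigma_1\) formula ``\(N(b)\wedge b\in b_0 \cup\{b_0\}\) and \(b\) \emph{has} a table'' in contrapositive form. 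Concretely, take the set \(S=\{x\in b_0\cup\{b_0\}:\ldots\}\); this cannot be separated since the property is not \(\Delta_0\). The better route is therefore to argue about the greatest \(b\leq b_0\) that has a table. Greatest elements are obtained by applying foundation to the complement, replacing the reversed order by \(x\mapsto b_0\setminus x\) on \(N\)-sets. Because \(b_0\setminus x\) is not an ordinal, I would instead use the \(\Sigma_1\) formula \(A(d)\equiv N(d)\wedge\exists c\,(c\cup d=b_0\cup\{b_0\} \text{ as disjoint segments}\wedge c \text{ has a table})\), whose \(\in\)-minimal witness \(d\) corresponds to a maximal \(c\). The one-step extension of a table \(f\) to \(f\cup\{\langle c+1,\mathrm{Succ}(f(c))\rangle\}\) uses only Pairing and Union, and this gives the contradiction. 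The same pattern then yields multiplication tables, using addition totality at each step.

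\textbf{Step 3: the \(\PAminus\) axioms and induction.} Commutativity, associativity, distributivity and the order axioms are each proved by a single \(\Sigma_1\)/\(\Pi_1\) induction of the same shape, i.e.\ an \(\in\)-foundation instance on counterexamples. For \(I\Sigma_n\), let \(\theta(x)\) be arithmetical \(\Sigma_n\). Its translation is set-theoretic \(\Sigma_n\) for \(n\geq1\): the arithmetical quantifiers become quantifiers restricted by the \(\Delta_0\) predicate \(N\), the atomic \(+,\cdot\) become \(\Sigma_1\) and are, by Step 2, also equivalent to \(\Pi_1\) forms (``every table of that length ends at \(c\)''), and bounded arithmetical quantifiers become \(\in\)-bounded quantifiers. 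The last point requires \(\Sigma_n\) to be closed under bounded quantification, i.e.\ collection. That is unavailable, so instead I would prove least-number principles for \(\Pi_n\) and \(\Sigma_n\) (equivalent to \(I\Sigma_n\) over \(I\Delta_0\)) by working with the prenex normal form, where bounded quantifiers are pushed through only at the outermost unbounded block. A failure of induction for \(\theta\) up to \(b\) gives a \(\Pi_n\) (respectively \(\Sigma_n\)) property, namely ``\(N(x)\wedge x\leq b\wedge\neg\theta(x)\)''. Foundation supplies its \(\in\)-least element, which is impossible since it is neither zero nor a successor of a \(\theta\)-element. Under \(\Fnd{\Pi_n}\) this directly gives \(L\Pi_n\), which is equivalent to \(I\Sigma_n\); under \(\Fnd{\Sigma_n}\) it gives \(L\Sigma_n\), also equivalent to \(I\Sigma_n\).

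\textbf{Main obstacle.} The key difficulty is Step 2 together with the complexity bookkeeping: showing that the translations have the right Lévy complexity without any Collection, and obtaining totality of the tables from \(\Sigma_1\)-Foundation alone via the maximal-segment trick.
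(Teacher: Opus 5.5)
Your treatment of uniqueness, of totality under $\Fnd{\Pi_n}$, and of atomic formulas agrees with the paper. However, the $\Fnd{\Sigma_1}$ case of totality has a genuine gap. Your formula $A(d)$ cannot work as written. Every nonempty set satisfying $N$ contains $\varnothing$, so two disjoint $N$-sets $c,d$ with $c\cup d=b_0\cup\{b_0\}$ force $c=\varnothing$ or $d=\varnothing$. An $\in$-minimal witness therefore tells you nothing about a maximal $c$. Suppose instead that $d$ is meant to be the length of the final segment $b_0^+\setminus c$. Saying so requires either the addition you are trying to construct, or an order-isomorphism between $d$ and that segment. The existence of that isomorphism is itself a recursion of length $b_0$, which is the same problem again. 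The missing idea is to index the calculation from the top and read it backwards. For fixed $a,b$, let $S(c)$ say that $c\in b\cup\{b\}$ and that some table $h$ on the positions $c,\dots,b$ has value $a$ at $b$, with each value the successor of the value at the next higher position. This is $\Sigma_1$, and $S(b)$ holds trivially. An $\in$-minimal $c$ with $S(c)$ cannot be a successor $d\cup\{d\}$, since adding one row at the bottom gives $S(d)$. Hence $c=\varnothing$, and the value at $\varnothing$ is $a+b$. No complement or reversal map is needed.

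The second gap is in Step 3. Knowing that atomic formulas are both $\Sigma_1$ and $\Pi_1$ does not make the translation of a $\Delta_0^0$ matrix $\Sigma_1$ or $\Pi_1$. For example, $\forall x<y\,\exists z<y\,(x\cdot z=y)$ becomes $\forall x\in y\,\exists z\in y\,\operatorname{Mul}(x,z,y)$, and pulling the table quantifier of $\operatorname{Mul}$ out past $\forall x\in y$ is an instance of Collection. Passing to prenex form does not help. Treating bounded arithmetical quantifiers as unbounded raises the level by the number of alternations inside the matrix, and moving them past unbounded quantifiers of the opposite kind is again Collection. So, as stated, you do not know that the counterexample class of a $\Sigma_n^0$ induction instance is $\Pi_n$ (or, for $\Pi_n^0$ instances, $\Sigma_n$), and Foundation at level $n$ does not apply. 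The paper closes this by evaluating each $\Delta_0^0$ formula with a truth-value table. A bounded quantifier is computed by successive disjunctions or conjunctions along its bound, with the inner evaluation tables and the arithmetic tables carried as auxiliary data $e$ in a $\Delta_0$ step relation $\operatorname{Step}(u,v,e,\bar p)$ with unique output. So your Step 2 must be proved for an arbitrary such step relation, not only for $+$ and $\cdot$. It runs forward when the $\Pi_1$ failure predicate is covered by the available Foundation, and backward as above under $\Fnd{\Sigma_1}$. Existence and uniqueness of the evaluation table then make each translated $\Delta_0^0$ formula equivalent both to ``some evaluation table outputs true'' and to ``every evaluation table outputs true'', so it is $\Sigma_1$ and $\Pi_1$. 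Only then do $\Sigma_n^0$ and $\Pi_n^0$ formulas translate into $\Sigma_n$ and $\Pi_n$ formulas, after which your least-number argument goes through.
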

\begin{proof}
  Assume first \(\Pi_n\)-Foundation. Fix \(a\) satisfying \(N\). If some \(b\) has no complete addition table beginning at \(a\), then the assertion that \(b\) has this property is \(\Pi_1\), and hence \(\Pi_n\). Choose such a \(b\) that is minimal under membership. It is not zero. Let \(b^-\) be its predecessor. By minimality there is an addition table through \(b^-\); if its last value is \(c\), adjoining the pair \(\langle b,c\cup\{c\}\rangle\) extends it through \(b\), a contradiction. Addition is therefore total.

  It is also single-valued. If two addition tables disagree, \(\Delta_0\)-Separation gives the set of positions at which their values differ. Choose its least member. The two tables agree at the preceding position, and uniqueness of successors then forces them to agree at the chosen position as well. The same arguments prove that multiplication is total and single-valued.

  For each atomic arithmetical equation, both truth and falsity now have \(\Sigma_1\) definitions: truth is witnessed by a table with the stated output, while falsity is witnessed by a table with a different output. After terms are replaced by their graph relations and formulas are put in negation normal form, recursion on formulas shows that \(\Sigma_k^0\) and \(\Pi_k^0\) formulas translate, respectively, into \(\Sigma_k\) and \(\Pi_k\) formulas. If an instance of \(\Sigma_n^0\)-induction failed, its counterexamples would therefore form a \(\Pi_n\) class. A membership-minimal counterexample is neither zero nor a successor: the base clause excludes zero, and at a successor the minimality assumption and the induction step give a contradiction. Thus \(\ReSzero+\Fnd{\Pi_n}\) interprets \(I\Sigma_n\).

  Now assume \(\Sigma_n\)-Foundation. When \(n\geq2\), the preceding construction of addition and multiplication still applies, since its \(\Pi_1\) failure predicates can be placed in the strict class \(\Sigma_n\) by adding vacuous quantifier blocks. A failed instance of \(\Pi_n^0\)-induction has a \(\Sigma_n\) class of counterexamples. The same minimal-counterexample argument proves \(I\Pi_n\), which is equivalent to \(I\Sigma_n\).

  It remains to treat \(\Sigma_1\)-Foundation. Let
  \(\operatorname{Step}(u,v,e,\bar p)\) be a \(\Delta_0\) formula, where \(u\) is the current value, \(v\) is the next value when the calculation is read backwards, \(e\) is an auxiliary set used in checking the step, and \(\bar p\) is a tuple of parameters. Assume that for every admissible \(u\) there are \(v,e\) satisfying \(\operatorname{Step}(u,v,e,\bar p)\), and that \(v\) is unique.

  For \(c\in b\cup\{b\}\), let \(R(c,b,a,z,h,\bar p)\) assert that \(h\) lists all values from position \(c\) through position \(b\), that the value at \(b\) is \(a\), that the value at \(c\) is \(z\), and that each pair of adjacent rows satisfies \(\operatorname{Step}\). The free variable \(b\) is the total length, \(a\) is the prescribed value at the upper endpoint, \(z\) is the value at position \(c\), and \(h\) is the whole finite table. Once \(h\) is given, every quantifier is bounded, so \(R\) is \(\Delta_0\). Consequently
  \[
  S(c)\longleftrightarrow
  c\in b\cup\{b\}\wedge\exists z\,\exists h\,R(c,b,a,z,h,\bar p)
  \]
  is \(\Sigma_1\). Its only free variable \(c\) records how far the table has been completed backwards from its upper endpoint.

  Clearly \(S(b)\) holds. Choose a membership-minimal \(c\) satisfying \(S\). If \(c\) is the successor of \(d\), totality of \(\operatorname{Step}\) extends the existing table by one row at its beginning, giving \(S(d)\), a contradiction. Hence \(c=0\), and every internally finite calculation of this kind can be completed. Its value at zero is unique as well. Given two complete tables, use \(\Delta_0\)-Separation to form the set of positions at which their values agree. This set contains \(b\). If its membership-minimal element were a successor, functionality of \(\operatorname{Step}\) would put its predecessor in the same set. Its minimal element is therefore zero.

  Taking \(\operatorname{Step}(u,v,e)\) to mean that \(v\) is the successor of \(u\) gives addition. For multiplication, let the auxiliary set \(e\) include a complete addition table showing \(v=u+a\). For any fixed arithmetical \(\Delta_0^0\) formula, proceed recursively through the formula: evaluate a bounded existential quantifier by taking successive disjunctions while moving from its bound down to zero, and evaluate a bounded universal quantifier by successive conjunctions. The backward calculation just proved supplies a unique result in both cases.

  It follows that a \(\Pi_1^0\) formula translates into a \(\Pi_1\) formula, while its negation translates into a \(\Sigma_1\) formula. If an instance of \(\Pi_1^0\)-induction failed, \(\Sigma_1\)-Foundation would give a membership-minimal counterexample. The base and successor clauses again exclude the two possible cases. Thus the interpreted arithmetic satisfies \(I\Pi_1=I\Sigma_1\).
\end{proof}

\begin{theorem}
  For every \(n\geq1\), \(\ReSzero+\Sep{\Sigma_n}\) interprets \(\IS{n}\).
\end{theorem}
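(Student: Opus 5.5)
The plan is to imitate the preceding proof, with Separation taking over the role of Foundation. Separation alone does not produce minimal elements. So I would first cut the natural-number domain down to a class on which minimal elements exist, and then use \(\Sep{\Sigma_n}\) to form the sets of counterexamples. By Set Difference inside a fixed set \(b\cup\{b\}\), \(\Sep{\Sigma_n}\) also yields \(\Sep{\Pi_n}\) on subsets of that set.

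Concretely, let \(D(a)\) assert \(N(a)\) together with: every nonempty subset \(X\subseteq a\cup\{a\}\) has an \(\in\)-minimal element. The domain formula of an interpretation may be arbitrary, so this unbounded clause is allowed. I would then check three things.
\begin{enumerate}[label=(\alph*)]
\item \(D\) is closed downward, by transitivity of \(a\).
\item \(D\) is closed under \(a\mapsto a\cup\{a\}\). Given \(X\subseteq a\cup\{a\}\cup\{a\cup\{a\}\}\), form \(X\cap(a\cup\{a\})\) by \(\Delta_0\)-Separation; either it has a minimal element or \(X\) is the singleton \(\{a\cup\{a\}\}\).
\item Any two members of \(D\) are comparable and have unique predecessors. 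This uses minimal elements of \(\Delta_0\)-separated subsets of a fixed \(D\)-set, in place of the Foundation steps used before.
\end{enumerate}

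Next I would build addition and multiplication tables exactly as before. Fix \(a,b\in D\). By \(\Sep{\Sigma_1}\), form
\[
\{c\in b\cup\{b\}:\exists f\,(f\text{ is an addition table from }a\text{ of length }c)\}.
\]
If its complement in \(b\cup\{b\}\) were nonempty, that complement would have a minimal element because \(b\in D\). Extending the table one step at the predecessor gives a contradiction. Uniqueness follows from a \(\Delta_0\)-separated set of disagreement positions, and multiplication is handled the same way. Atomic arithmetical formulas then have \(\Sigma_1\) definitions of both truth and falsity. Bounded arithmetical quantifiers become set-bounded quantifiers over \(b\cup\{b\}\), whose elements lie in \(D\) automatically.

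For induction, suppose an instance of \(\Sigma_n^0\)-induction fails at \(b\). By \(\Sep{\Sigma_n}\), together with complementation, the set of counterexamples below \(b\cup\{b\}\) exists. It has a minimal element because \(b\in D\), and the base and successor clauses rule out both possible cases, as in the previous proof.

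The main obstacle is the complexity of the translation. An unbounded arithmetical quantifier \(\exists y\) becomes \(\exists y\,(D(y)\wedge\cdots)\), and \(D\) is \(\Pi_1\). For \(n\geq2\) this is absorbed into the blocks. For \(n=1\), however, a \(\Sigma_1^0\) formula would translate to \(\Sigma_2\), beyond the reach of \(\Sep{\Sigma_1}\). My first attempt would be to show that for \(\Sigma_1^0\) formulas the conjunct \(D(y)\) may be replaced by the \(\Delta_0\) condition \(N(y)\): any \(N\)-witness should be replaceable by one computed from tables starting in \(D\), hence lying in \(D\) by closure. Failing that, I would shrink \(D\) further to a cut on which witnesses to \(\Sigma_1\) formulas with \(D\)-parameters can be bounded, using the \(\Sigma_1\)-separated sets above. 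Finally, I would appeal to the closure of interpretability under shortening cuts in the spirit of \eqref{eq:q-spine} to recover full \(I\Sigma_1\).
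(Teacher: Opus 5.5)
The proposal has a genuine gap, and it is more serious than you indicate. Your domain \(D\) is \(\Pi_1\), and you do not show it is \(\Sigma_1\). So relativising even the \emph{innermost} unbounded quantifier over a \(\Delta_1\) matrix already reaches level two: \(\exists y\,(D(y)\wedge\theta^*)\) is \(\Sigma_2\), and \(\forall y\,(\neg D(y)\vee\theta^*)\) is \(\Pi_2\) because \(\neg D\) is \(\Sigma_1\). Each further block adds a level; for example, \(\exists y_1\,(D(y_1)\wedge\forall y_2\,(\neg D(y_2)\vee\theta^*))\) is \(\Sigma_3\). Hence a \(\Sigma_n^0\) formula translates in general only into a \(\Sigma_{n+1}\) formula, for \emph{every} \(n\geq1\). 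Your remark that the \(\Pi_1\) conjunct is absorbed for \(n\geq2\) is mistaken, since absorption works only when the scope already has complexity at least two. Consequently \(\Sep{\Sigma_n}\) never reaches the sets of counterexamples.

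Neither fallback repairs this. Replacing \(D(y)\) by \(N(y)\) is unjustified. Without Foundation there may be ill-founded sets satisfying the \(\Delta_0\) condition \(N\). Nothing prevents the translated matrix from holding on such a set while failing throughout \(D\), and no construction maps that witness back into \(D\). Passing to a cut \(J\) only relocates the problem, because relativising to \(J\) raises complexity in exactly the same way unless \(J\) is itself \(\Sigma_1\). The shortening behind \eqref{eq:q-spine} secures closure properties and bounded induction on a cut, but gives no control over unbounded \(\Sigma_1\) formulas.

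The missing idea is a domain defined by a \(\Sigma_1\) formula whose witness certifies the well-ordering. The paper uses \(\operatorname{Num}(a)\): \(a\) is a chain, and there is a set \(S\) containing all binary sequences on initial segments of \(a^+=a\cup\{a\}\), such that every \(f\in S\) defined on \(a\) has empty support or a support with a least and a greatest element. The characteristic function of any subset of \(a\) lies in \(S\), so this yields the well-order property. At the same time \(\operatorname{Num}\) is \(\Sigma_1\) and \(\neg\operatorname{Num}\) is \(\Pi_1\), so relativisation costs nothing.

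The real work is a finite recursion lemma. Let \(A\) be linearly ordered so that every nonempty subset has least and greatest elements. Then a total function \(x\mapsto y\) given by \(\exists e\,\delta(x,y,e)\) with \(\delta\in\Delta_0\) has a set graph. The proof applies \(\Sep{\Sigma_1}\) to the \(\Sigma_1\) statement that such a function and its witness function exist on \(A_{\leq a}\), and takes a least failure. This lemma shows that \(F_a=\{f\in S:f\colon a\to2\}\) is again doubly well-ordered. It also supplies the image \(\{f\cup\{\langle a,i\rangle\}:f\in F_a,\ i\in2\}\) needed for closure of \(\operatorname{Num}\) under successor.

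The lemma also closes a second gap in your plan. A bounded quantifier in front of the \(\Sigma_1\) relations \(\operatorname{Add}\) and \(\operatorname{Mul}\), as in \(\forall x\in b\,\exists f\,\delta\), is not \(\Sigma_1\) in the L\'evy sense without \(\Sigma_1\)-Collection, which you do not have. The lemma is what shows that \(\theta^*\) and \(\neg\theta^*\) are \(\Sigma_1\) for every \(\Delta_0^0\) formula \(\theta\).

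Once successor closure of \(\operatorname{Num}\) is proved, \(\Sep{\Sigma_1}\) applied to \(\{c\in a^+:\operatorname{Num}(c)\}\), together with a minimal failure, shows that your \(D(a)\) implies \(\operatorname{Num}(a)\). So \(D\) could be salvaged, but only through this certificate.
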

\begin{proof}
  Throughout the proof write \(a^+=a\cup\{a\}\). We first record a finite recursion fact that will be used repeatedly. Let \((A,L)\) linearly order \(A\), and suppose that every nonempty subset of \(A\) has both an \(L\)-least and an \(L\)-greatest element. Let \(\delta(x,y,e,\bar p)\) be a \(\Delta_0\) formula such that, for every \(x\in A\), there exists a unique \(y\) for which some \(e\) satisfies \(\delta(x,y,e,\bar p)\). Then the graph of this function on \(A\) is a set.

  For \(a\in A\), let \(P(a)\) assert that there are two functions \(f,g\), both defined on the closed initial segment \(A_{\leq a}\), such that
  \[
    \delta(x,f(x),g(x),\bar p)
    \qquad(x\in A_{\leq a}).
  \]
  The formula \(P\) is \(\Sigma_1\). First use \(\Sigma_1\)-Separation to collect the elements satisfying \(P\), and then take the difference to obtain the set of elements failing \(P\). If this set is nonempty, take its \(L\)-least element \(a\). If \(a\) is the first element, Pairing gives the required one-point functions. Otherwise \(A_{<a}\) has an \(L\)-greatest element, and two functions defined on that proper initial segment need only be extended by one ordered pair each to cover \(a\). Both cases contradict minimality. Hence \(P(a)\) holds for every \(a\in A\), and taking the last element of \(A\) gives the full functions. Applying this argument twice also yields Cartesian products of two such sets with their lexicographic orders, as well as images of functions on such sets. This conclusion applies only to a given ordered set \(A\), and does not imply Cartesian products, \(R_8\), or Replacement for arbitrary sets.

  Now define the interpreted natural numbers. Let \(\operatorname{Chain}(a)\) be the following \(\Delta_0\) condition: \(a\) is transitive, membership linearly orders \(a\), and every nonzero element of \(a^+\) is a successor. Following \citet[Section 10]{Mathias2006}, use the \(\Sigma_1\) formula
  \[
    \Seq(S,2,a^+),
  \]
  which asserts that \(S\) contains every binary sequence whose domain is an initial segment of \(a^+\). Define
  \[
    \operatorname{supp}(f)=\{x\in a:f(x)=1\}.
  \]
  This is obtained by \(\Delta_0\)-Separation on \(a\). Then
  \[
  \begin{aligned}
    \operatorname{Num}(a)\longleftrightarrow\exists S\,[{}&
    \operatorname{Chain}(a)\wedge\Seq(S,2,a^+)\wedge{}\\
    &\forall f\in S\,(f\colon a\to2\longrightarrow\\
    &\qquad[\operatorname{supp}(f)=\varnothing\ \vee\
    \operatorname{supp}(f)
    \text{ has a membership-least and a membership-greatest element}])].
  \end{aligned}
  \]
  The formula \(\operatorname{Num}\) is \(\Sigma_1\), and its negation is \(\Pi_1\).

  The class of sets satisfying \(\operatorname{Num}\) is closed under zero, successor, and members. The zero case is immediate. If \(b\in a\), restrict the witnessing \(S\) to sequences whose domain is an initial segment of \(b^+\). Now suppose \(S\) witnesses \(\operatorname{Num}(a)\), and let
  \[
    F_a=\{f\in S:f\colon a\longrightarrow2\}.
  \]
  Order \(F_a\) by the short lexicographic order, in which two functions are compared at the largest coordinate where they differ. For any nonempty \(C\subseteq F_a\), define nonempty subsets \(C_x\) by recursion along the reverse order of \(a\): at coordinate \(x\), if some function in the current set takes value zero at \(x\), keep exactly those functions; otherwise keep those taking value one. This recursion is carried out by the finite recursion fact above. The single function that remains is the least member of \(C\). Interchanging zero and one gives the greatest member. Thus \(F_a\) again has the property that every nonempty subset has a least and a greatest element.

  Using the preceding Cartesian product and image constructions, form
  \[
    \{f\cup\{\langle a,i\rangle\}:f\in F_a,\ i\in2\}.
  \]
  Combine this set with \(S\) to obtain all binary sequences on \(a^+\). The newly added top coordinate also ensures that every nonempty nonzero set of positions has a least and a greatest element. Therefore
  \[
  \operatorname{Num}(0),\qquad
  \operatorname{Num}(a)\longrightarrow\operatorname{Num}(a^+),\qquad
  \operatorname{Num}(a)\wedge b\in a\longrightarrow\operatorname{Num}(b).
  \]

  Moreover, every set satisfying \(\operatorname{Num}\) has the double well-order property just described. If \(X\subseteq a\) is nonempty, \(\Delta_0\)-Separation produces the characteristic function of \(X\) inside the set supplied by \(S\). This function belongs to \(S\), so \(X\) has a least and a greatest element. If both \(a\) and \(b\) satisfy \(\operatorname{Num}\), a least-counterexample argument in \(a^+\) and \(b^+\) shows that
  \[
    a\in b\quad\text{or}\quad a=b\quad\text{or}\quad b\in a,
  \]
  and exactly one of the three alternatives holds. More precisely, one first shows that the two transitive chains are comparable by inclusion. If the inclusion is proper without equality, the least element of the difference of the longer chain is the shorter chain itself. Thus equality in the interpretation is set equality, and the order is membership.

  Now define addition and multiplication. For \(a,b,c\) satisfying \(\operatorname{Num}\), let \(\operatorname{Add}(a,b,c)\) assert that there is a function \(f\) with domain \(b^+\) such that
  \[
    f(0)=a,\qquad f(b)=c,\qquad
    f(i^+)=f(i)^+\quad(i\in b).
  \]
  In this definition one simultaneously quantifies over a function with domain \(b^+\), together with, for each \(f(i)\), the sets needed to verify \(\operatorname{Num}(f(i))\). After this initial quantifier, only bounded quantifiers remain. Hence \(\operatorname{Add}\) is \(\Sigma_1\).

  For any \(d\in b^+\), let \(P(d)\) assert that there is a function satisfying the recursive equations on domain \(d^+\). The formula \(P\) is \(\Sigma_1\). If some element of \(b^+\) failed \(P\), choose the least such \(d\). It is not zero. If \(d=e^+\), then a function defined on \(e^+\) can be extended to \(d^+\) by adjoining \(\langle d,f(e)^+\rangle\), a contradiction. Addition is therefore total. If two such functions have different values, choose the least position of disagreement in \(b^+\). It is not zero, and the values at the preceding position are equal; uniqueness of successors forces the values at the chosen position to be equal too. Addition is therefore single-valued.

  Multiplication is defined in exactly the same way: set \(f(0)=0\), and require \(f(i^+)\) to be the sum of \(f(i)\) and \(a\). After existentially quantifying over the addition functions needed at each step, multiplication is again given by a \(\Sigma_1\) formula. The same least-counterexample argument proves that it is total and single-valued. Thus we have relations
  \[
    \operatorname{Add}(a,b,c),\qquad
    \operatorname{Mul}(a,b,c),
  \]
  satisfying the usual recursive equations. Since outputs are unique, failure of addition is also expressible by a \(\Sigma_1\) formula:
  \[
    \neg\operatorname{Add}(a,b,c)\longleftrightarrow
    \exists c'\bigl(\operatorname{Num}(c')\wedge
    \operatorname{Add}(a,b,c')\wedge c'\ne c\bigr),
  \]
  and the same holds for multiplication. Together with the linear order obtained above, this shows that all atomic arithmetical formulas and their negations are equivalent, after interpretation, to \(\Sigma_1\) formulas.

  By induction on the structure of a \(\Delta_0^0\) formula, one can further show that \(\theta^I\) and \(\neg\theta^I\) are both equivalent to \(\Sigma_1\) formulas. Boolean connectives present no difficulty. For a bounded universal quantifier, suppose that for every \(x\in a\) there is a witness for the corresponding \(\Sigma_1\) formula. Taking the truth value as the unique output and the witness as an auxiliary set, apply the finite recursion fact at the beginning of the proof to obtain a function defined on all of \(a\). The negation of a bounded existential quantifier is handled similarly. The functions used here are all defined on some set satisfying \(\operatorname{Num}\), so no Collection axiom is needed.

  Now relationalise the arithmetical language and put formulas in negation normal form. Existential quantifiers are restricted by the \(\Sigma_1\) definition of \(\operatorname{Num}\), and universal quantifiers outside the interpreted domain are handled by the \(\Pi_1\) definition of \(\neg\operatorname{Num}\); bounded arithmetical quantifiers are translated directly into bounded membership quantifiers. Using the simultaneous treatment of \(\Delta_0^0\) formulas and their negations, induction on the quantifier prefix shows that, for every \(k\geq1\), arithmetical \(\Sigma_k^0\) formulas translate into \(\Sigma_k\) formulas, and arithmetical \(\Pi_k^0\) formulas translate into \(\Pi_k\) formulas.

  Finally fix a \(\Sigma_n^0\) formula \(\varphi(x,\bar p)\), and assume its zero clause and successor induction step. If the induction conclusion fails, choose \(a\) satisfying \(\operatorname{Num}(a)\) such that \(\neg\varphi^I(a,\bar p)\). By the complexity calculation just given, Separation on \(a^+\) forms
  \[
    C=\{x\in a^+:\varphi^I(x,\bar p)\},
    \qquad F=a^+\setminus C.
  \]
  The set \(F\) is nonempty, and \(a^+\) has the double well-order property, so \(F\) has a least element \(b\). This \(b\) is not zero. If \(b=c^+\), then minimality gives \(\varphi^I(c,\bar p)\), and the successor induction step gives \(\varphi^I(b,\bar p)\), a contradiction. Therefore the interpreted arithmetic satisfies \(I\Sigma_n\). Taking \(n=1\) for induction, and then using the recursive equations for addition and multiplication, proves associativity, commutativity, distributivity, and the remaining axioms of a discretely ordered semiring. This gives the required arithmetical structure.
\end{proof}

The final finite-level phenomenon is that Infinity raises the interpreted induction level.
\begin{theorem}
  For every \(n\geq1\), \(\Tzero+\Inf+\Fnd{\Pi_n}\) interprets \(\IS{n+1}\).
\end{theorem}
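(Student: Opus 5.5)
We interpret arithmetic on \(\omega\) itself, which exists by \(\Inf\) and is \(\Delta_0\)-definable. The gain of one level comes from the fact that a formula quantifying only over natural numbers has all its number quantifiers bounded by the set \(\omega\). Concretely, take the domain to be the members of \(\omega\), equality to be set equality, and order to be membership. As in the proof of the preceding theorem for \(\ReSzero+\Fnd{\Sigma_n}\) and \(\ReSzero+\Fnd{\Pi_n}\), first establish addition and multiplication. Using \(\Pi_1\)-Foundation, which is available since \(n\geq1\), the addition and multiplication tables on \(\omega\) exist and are unique. The key additional step is then to obtain the graphs of \(+\) and \(\cdot\) as sets. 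I would try \(\GJ\) (Cartesian product, so \(\omega\times\omega\times\omega\) exists) together with \(\Delta_0\)-Separation. This does not work directly, since the addition predicate is \(\Sigma_1\). Instead I would prove, by \(\Pi_n\)-Foundation, that for each \(a\in\omega\) the set \(\{\langle b,c\rangle\in\omega\times\omega: a+b=c\}\) exists. This is done by taking a membership-minimal \(a\) for which no set \(g\subseteq \omega\times\omega\) satisfies the \(\Delta_0\) recursive clauses relative to \(\omega\); that failure predicate is \(\Pi_1\). Then use \(R_8\) to assemble the sections into the full graph. Multiplication is handled the same way, with the addition graph as parameter.

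Once the graphs \(P,M\subseteq\omega^3\) exist as sets, every arithmetical formula translates into a set formula with parameters \(\omega,P,M\) in which each number quantifier becomes a quantifier bounded by \(\omega\). So every arithmetical formula, and in particular every \(\Sigma_{n+1}^0\) formula, translates into a \(\Delta_0\) formula, and full induction would follow from \(\Delta_0\)-Separation on \(\omega\) plus Set Foundation. That is too strong to be believable, and the error is that the interpretation must be parameter-free. The translation therefore has to define \(P\) and \(M\): ``\(\exists P\,(P\) is the addition graph\()\)'' and ``\(\forall P\,(P\) is the addition graph \(\to\ldots)\)''. The graph condition is \(\Delta_0\) in \(P\) and \(\omega\), and \(\omega\) is itself \(\Delta_0\)-definable. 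Hence each arithmetical formula is equivalent both to a \(\Sigma_1\) and to a \(\Pi_1\) set formula. Then a failed instance of induction for an arbitrary arithmetical formula would have a \(\Pi_1\) counterexample class, and \(\Pi_1\)-Foundation would give full \(\PA\). That conflicts with the earlier upper bound \(\IS{n+1}\Int\Tzero+\Inf+\Fnd{\Pi_n}\) and the strictness of the hierarchy. So the main obstacle is exactly the existence of the graph of addition as a set, and the plan must not assume it.

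The realistic plan is therefore this. Obtain, by \(\Pi_n\)-Foundation and \(R_8\), set-sized \emph{partial} tables: for each \(a\in\omega\), the restriction of addition and multiplication to \(a\times a\), proved to exist by a membership-minimal failure argument on \(a\). This follows the finite-recursion fact used in the proof for \(\ReSzero+\Sep{\Sigma_n}\). With the tables as bounded parameters, arithmetical \(\Delta_0^0\) formulas become \(\Delta_0\), unbounded number quantifiers become quantifiers bounded by \(\omega\), and the existential quantifier over the table counts as one \(\Sigma_1\) block. A \(\Sigma_{n+1}^0\) formula has \(n+1\) number blocks, all bounded by \(\omega\), so only the table quantifiers are unbounded. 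Grouping them, the translation becomes \(\Pi_n\)-expressible with one fewer alternation than in the case without \(\Inf\).

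Finally, suppose an instance of \(\Sigma_{n+1}^0\)-induction fails. Its counterexamples form a class \(\{x\in\omega:\neg\varphi^I(x)\}\) of complexity \(\Pi_n\). A membership-minimal counterexample is then excluded by the zero clause and the successor clause, exactly as in the earlier proofs, which yields \(\IS{n+1}\).

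The step I expect to be hardest is the complexity count in the previous paragraph: showing that bounding by \(\omega\) genuinely saves one alternation when only table quantifiers remain unbounded. If it fails, I would instead derive \(\Sigma_{n+1}\)-Separation restricted to subsets of \(\omega\) from \(\Pi_n\)-Foundation and then apply the argument of the theorem for \(\ReSzero+\Sep{\Sigma_n}\).
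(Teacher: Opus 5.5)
\textbf{There is a genuine gap.} It sits at the step you flagged, and your own collapse argument shows that the step cannot be repaired.

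Your partial tables cover only arguments below some \(a\in\omega\). The table needed to evaluate the \(\Delta^0_0\) matrix therefore depends on the current values of the number variables, so the table quantifier must stay inside the scope of the number quantifiers. In the L\'evy hierarchy used in the paper, a formula like \(\forall x\in\omega\,\exists T\,\delta(x,T)\) is not \(\Sigma_1\): a quantifier bounded by \(\omega\) whose scope contains an unbounded quantifier still counts as an unbounded block. Moving \(\exists T\) outward past \(\forall x\in\omega\) is exactly an instance of Collection, which \(\Tzero+\Inf+\Fnd{\Pi_n}\) does not have.

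Worse, that instance would give a set \(B\) containing a partial table for every \(x\in\omega\). \(\Delta_0\)-Separation and Union on \(B\) would then yield the full graph of addition, which you already showed cannot be provable. So bounding number quantifiers by \(\omega\) saves no alternation. \(\Sigma^0_k\) still translates into \(\Sigma_k\), the counterexample class of a failed \(\Sigma^0_{n+1}\)-induction instance is \(\Pi_{n+1}\), and your argument only reproduces \(\IS{n}\).

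The fallback fails as well: even full Foundation does not give \(\Sigma_1\)-Separation on subsets of \(\omega\). The rudimentary closure of \(\omega\cup\{\omega\}\) is a well-founded model of \(\Tzero+\Inf\). By the standard fact that the subsets of a transitive \(U\) lying in its rudimentary closure are exactly those definable over \((U,\in)\), its subsets of \(\omega\) are only the finite and cofinite ones. The set of even numbers is \(\Sigma_1\)-definable there via finite addition tables, yet it is missing.

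Incidentally, your first attempt fails not because of parameter-freeness. It fails because \(R_8\) only forms the sections of a relation that is already a set. Assembling the definable family of sections \(g_a\) would need Replacement.

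\textbf{The missing idea} is to absorb the outer existential quantifier of the \(\Sigma^0_{n+1}\) formula into the \emph{set} chosen by Foundation, rather than into formula complexity. This is where \(\Inf\) and \(R_8\) are really used.

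The paper codes \((i,u)\) by \(C(p,i,u,s)=p\cup\{\varnothing,H(i,u,s)\}\), from which \(i,u,s\) can be recovered. A \(\Pi_1\) least-failure argument shows that for every finite ordinal \(i\) there is a stage function \(s\) with \(s(j+1)=s(j)\cup\{C(s(j),j,u,q_j):u\in\omega\}\); the set in braces is an \(R_8\)-image of \(\omega\). Every code from an earlier row is then a member of every code from a later row.

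Now take \(\exists u\,\psi(u,i,\bar a)\) with \(\psi\) in \(\Pi^0_n\). The predicate ``\(c\) codes some \((i,u)\) with \(\psi(u,i,\bar a)\)'' is \(\Pi_n\). \(\Pi_n\)-Foundation gives a membership-minimal such \(c\), and its row number is the least \(i\) satisfying the \(\Sigma^0_{n+1}\) formula. That is the \(\Sigma^0_{n+1}\) least-number principle, hence \(\IS{n+1}\). No alternation is saved anywhere; Foundation simply searches over witnesses and numbers at the same time.
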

\begin{proof}
  On \(\omega\), zero, successor, addition, and multiplication are still defined by the formulas for finite ordinals used above. The scheme \(\Pi_n\)-Foundation proves that these operations are total and single-valued. It is therefore enough to prove the least-number principle for \(\Sigma_{n+1}^0\) formulas.

  Fix a Kuratowski representation of ordered triples. For a finite ordinal \(i\), a natural number \(u\), and a set \(s\), define
  \[
    H(i,u,s)=\bigl\{\{\langle i,u,s\rangle\}\bigr\},
    \qquad C(p,i,u,s)=p\cup\{\varnothing,H(i,u,s)\}.
  \]
  Here \(p\) denotes all elements already formed, \(i\) is the current row number, \(u\) is the natural-number parameter for that row, and \(s\) is the finite function being constructed. The empty set and \(H(i,u,s)\) have different forms inside \(C(p,i,u,s)\), while every member inherited from \(p\) contains the empty set. Thus \(i,u,s\) can be recovered uniquely from \(C(p,i,u,s)\).

  For each finite ordinal \(i\), construct a function \(s\) with domain \(i\cup\{i\}\) and \(s(0)=\varnothing\). For every \(j<i\), let \(q_j=s\upharpoonright(j\cup\{j\})\) be the first \(j+1\) rows, and require
  \[
    s(j+1)=s(j)\cup\{C(s(j),j,u,q_j):u\in\omega\}.
  \]
  In this formula, \(j\) is the current row number and \(q_j\) contains only rows already completed. The set in braces is the image of \(\omega\) under the fixed relation \(u\mapsto C(s(j),j,u,q_j)\), and hence exists by \(R_8\). Once \(s\) is given, all the displayed equalities are checked using only quantifiers bounded by \(s\), \(i\), and \(\omega\). Consequently, the assertion that no such \(s\) exists at \(i\) is \(\Pi_1\). A function exists at zero. If there were a least failed \(i\), its predecessor's function could be extended by the displayed equation, a contradiction. Such an \(s\) therefore exists for every finite ordinal \(i\).

  Now fix an arithmetical \(\Sigma_{n+1}^0\) formula
  \[
    \exists u\,\psi(u,i,\bar a),
  \]
  where \(i\) is the natural number under consideration, \(u\) is a witness, \(\bar a\) is a tuple of parameters, and \(\psi\) is \(\Pi_n^0\). Put \(q_i=s\upharpoonright(i\cup\{i\})\). For a set of the form \(c=C(s(i),i,u,q_i)\), let \(A(c)\) assert that \(\psi(u,i,\bar a)\) holds. The values \(i,u,q_i\) are recovered uniquely from \(c\), using only quantifiers bounded by \(\omega\) and finitely many iterated unions of \(c\). Hence \(A\) is \(\Pi_n\).

  If the original arithmetical formula holds at \(i\), choose a corresponding \(u\); then \(C(s(i),i,u,q_i)\) satisfies \(A\). By \(\Pi_n\)-Foundation, choose a membership-minimal set satisfying \(A\), and let \(i_0\) be the row number recovered from it. If the original formula also held at some \(j<i_0\), the corresponding set from row \(j\) would belong to \(s(i_0)\), hence would be a member of the chosen set, and would also satisfy \(A\). This contradicts minimality. Thus \(i_0\) is the least natural number at which the original formula holds. This is the \(\Sigma_{n+1}^0\) least-number principle, which is equivalent to \(I\Sigma_{n+1}\).
\end{proof}

We finish the reverse direction with the theories that interpret all of \(\PA\).
\begin{theorem}
  Each of the following theories interprets \(\PA\):
  \begin{enumerate}
    \item \(\ReSzero+\mathsf{Foundation}\);
    \item \(\ReSzero+\mathsf{Repl}\).
  \end{enumerate}
\end{theorem}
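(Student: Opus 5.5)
Both parts reduce to the preceding theorems. The key observation is that \(\PA\) is the union of the theories \(\IS{n}\), so any instance of induction has some fixed complexity \(\Sigma_n^0\). The obstacle is that an interpretation must be a single fixed family of formulas, not one family for each \(n\). So I will first check that the interpretations built above for different \(n\) are literally the same.

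\textbf{Part 1.} For \(\ReSzero+\mathsf{Foundation}\), I will use the interpretation from the theorem showing that \(\ReSzero+\Fnd{\Pi_n}\) interprets \(\IS{n}\). Its ingredients are the domain formula \(N(a)\), the addition and multiplication tables, and the order given by membership. None of these depends on \(n\); only the verification of induction used the bound \(n\). The \(\Pi_1\)-Foundation instances already make \(\operatorname{Succ}\)-predecessors unique and make any two sets satisfying \(N\) comparable. They also make addition and multiplication total and single-valued, so the semiring axioms hold as before.

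Now take an arbitrary instance of induction for a formula \(\varphi\) that is \(\Sigma_n^0\) for some \(n\). By the complexity calculation in that proof, the class of counterexamples is defined by a \(\Pi_n\) set-theoretic formula. The single Foundation instance for that formula gives a membership-minimal counterexample. This counterexample is neither zero nor a successor, which is a contradiction. Since every arithmetical formula lies in some \(\Sigma_n^0\), the fixed interpretation satisfies every induction axiom, and hence all of \(\PA\).

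\textbf{Part 2.} For \(\ReSzero+\mathsf{Repl}\), I will use the interpretation from the theorem showing that \(\ReSzero+\Sep{\Sigma_n}\) interprets \(\IS{n}\). Its domain \(\operatorname{Num}\), the relations \(\operatorname{Add}\) and \(\operatorname{Mul}\), and membership as the order are again independent of \(n\). It therefore suffices to show that \(\ReSzero+\mathsf{Repl}\) proves \(\Sep{\Sigma_n}\) for every \(n\). By \cref{lem:scheme-implications}(v), \(\Repl{\Sigma_{n+1}}\) yields \(\SColl{\Sigma_n}\), and by (ii) this yields \(\Sep{\Sigma_n}\) for \(n\geq1\). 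In that proof, the \(\Sigma_1\)-Separation used to build the finite recursion fact and the \(\Sigma_n\)-Separation used for the induction set \(C\) are then both available for all \(n\). So the one fixed interpretation satisfies \(I\Sigma_n\) for every \(n\), which is \(\PA\).

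\textbf{Main obstacle.} The delicate point is the uniformity claim: the earlier constructions must truly not change with \(n\). In Part 1, the earlier proof built addition in two ways. For \(\Sigma_1\)-Foundation it used a backward calculation; for \(\Pi_n\)-Foundation it used a minimal failure. I will fix the \(\Pi_n\) presentation, using \(\Pi_1\)-Foundation for the basic arithmetic, so that the interpreting formulas are identical for all \(n\). I will also check that padding with vacuous quantifiers keeps each translated formula inside some \(\Pi_n\) class. In Part 2, \cref{lem:scheme-implications} is stated over \(\ReSzero\), so it applies directly and no further work is needed.
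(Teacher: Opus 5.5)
Your proposal is correct and follows the paper's proof. In Part 1, the fixed domain $N$ with the addition and multiplication tables is kept, and full Foundation supplies a membership-minimal counterexample to each induction instance. In Part 2, the chain $\Repl{\Sigma_{n+1}}\Rightarrow\SColl{\Sigma_n}\Rightarrow\Sep{\Sigma_n}$ reduces everything to the uniform $\Sep{\Sigma_n}$ interpretation of $\IS{n}$. Your explicit check that the interpreting formulas do not vary with $n$, by fixing the $\Pi_1$-Foundation construction of the tables, spells out a uniformity that the paper uses only implicitly.
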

\begin{proof}
  For the first theory, use the same natural-number domain and the same formulas for addition and multiplication as in the Foundation theorem. Every arithmetical formula \(\varphi(m,\bar p)\) belongs to some finite level of the hierarchy. The corresponding instance of full Foundation gives a membership-minimal counterexample satisfying \(N(m)\wedge\neg\varphi(m,\bar p)\); the base and successor clauses of induction exclude zero and successors. Every arithmetical induction axiom therefore holds.

  For the second theory, use the interpretation from the Replacement theorem. For each \(n\), full Replacement includes \(\Sigma_{n+1}\)-Replacement, so the interpreted arithmetic satisfies \(I\Sigma_n\). Every individual arithmetical induction axiom belongs to some finite fragment \(I\Sigma_n\), and hence all induction axioms hold together.
\end{proof}

\section{Summary and Outlook}

We conclude with diagrams summarising the classification.

\begin{figure}[htbp]
\centering
\begin{adjustbox}{max width=\textwidth}
\begin{tikzcd}
	{\Szero+\mathsf{Coll}} & \ReSzero & \DB & {\Tzero+\Inf} & {\Tzero+\Coll{\Sigma_1}} \\
	\AD & \Szero & \GJ & {\GJ+\SetFnd} & \Tzero
	\arrow[from=1-1, to=2-2]
	\arrow[from=1-2, to=2-2]
	\arrow[from=1-3, to=1-2]
	\arrow[from=1-4, to=2-4]
	\arrow[from=1-5, to=2-5]
	\arrow[from=2-2, to=2-1]
	\arrow[from=2-3, to=1-3]
	\arrow[from=2-4, to=2-3]
	\arrow[from=2-5, to=2-4]
\end{tikzcd}
\end{adjustbox}
\caption{Set theories mutually interpretable with \(\Q\)}
\end{figure}

\begin{figure}[htbp]
\centering
\begin{adjustbox}{max width=\textwidth}
\begin{tikzcd}
	{\ReSzero+\Pow+\Coll{\Sigma_1}} & {\DB+\Pow+\Coll{\Sigma_1}} & {\GJ+\Pow+\Coll{\Sigma_1}} & {\Tzero+\Pow+\Coll{\Sigma_1}} \\
	{\ReSzero+\Pow} & {\DB+\Pow} & {\GJ+\Pow} & {\Tzero+\Pow}
	\arrow[from=1-1, to=2-1]
	\arrow[from=1-2, to=1-1]
	\arrow[from=1-2, to=2-2]
	\arrow[from=1-3, to=1-2]
	\arrow[from=1-3, to=2-3]
	\arrow[from=1-4, to=1-3]
	\arrow[from=1-4, to=2-4]
	\arrow[from=2-2, to=2-1]
	\arrow[from=2-3, to=2-2]
	\arrow[from=2-4, to=2-3]
\end{tikzcd}
\end{adjustbox}
\caption{Set theories mutually interpretable with \(\BS1+\expax\)}
\end{figure}

\begin{figure}[htbp]
\centering
\begin{adjustbox}{max width=\textwidth}
\begin{tikzcd}
	{\Tzero+\Inf+\mathsf{Foundation}} & {\Tzero+\mathsf{Foundation}} && {\ReSzero+\mathsf{Foundation}} \\
	{\Tzero+\Pow+\mathsf{Coll}+\mathsf{Foundation}} & {\Tzero+\mathsf{Coll}} & {\ReSzero+\mathsf{Coll}} & {\ReSzero+\mathsf{Repl}}
	\arrow[from=1-1, to=1-2]
	\arrow[from=1-2, to=1-4]
	\arrow[from=2-1, to=1-2]
	\arrow[from=2-1, to=2-2]
	\arrow[from=2-2, to=2-3]
	\arrow[from=2-3, to=2-4]
\end{tikzcd}
\end{adjustbox}
\caption{A short list of set theories mutually interpretable with \(\PA\)}
\end{figure}

\clearpage
\begin{landscape}
\thispagestyle{plain}
\vspace*{\fill}
\begin{figure}[H]
\centering
\begin{adjustbox}{max width=0.98\linewidth,max totalheight=0.78\textheight,center}
\begin{tikzcd}[column sep=small,row sep=small]
	&& {\Tzero+\Pow+\Coll{\Sigma_{n+1}}+\Fnd{\Sigma_n}+\Fnd{\Pi_n}} &&& \\
	& {\Tzero+\Pow+\Coll{\Sigma_{n+1}}+\Fnd{\Sigma_n}} && {\Tzero+\Pow+\Coll{\Sigma_{n+1}}+\Fnd{\Pi_n}} \\
	{\Tzero+\Pow+\Fnd{\Sigma_{n}}} & {\Tzero+\Coll{\Sigma_{n+1}}+\Fnd{\Sigma_n}} & {\Tzero+\Pow+\Coll{\Sigma_{n+1}}} & {\Tzero+\Coll{\Sigma_{n+1}}+\Fnd{\Pi_n}} && {\Tzero+\Pow+\Fnd{\Pi_{n}}} \\
	& {\Tzero+\Inf+\Fnd{\Sigma_{n}}} & {\Tzero+\Coll{\Sigma_{n+1}}} & {\Tzero+\Inf+\Fnd{\Pi_{n-1}}(n\geq 2)} & {\Tzero+\Fnd{\Pi_{n}}} & {\Tzero+\Repl{\Sigma_{n+1}}} \\
	{\DB+\Pow+\Fnd{\Sigma_{n}}} & {\Tzero+\Fnd{\Sigma_{n}}} & {\DB+\Pow+\Coll{\Sigma_{n+1}}} & {\Tzero+\SColl{\Sigma_{n}}} & {\Tzero+\Sep{\Sigma_n}} & {\DB+\Pow+\Fnd{\Pi_{n}}} \\
	& {\DB+\Inf+\Fnd{\Sigma_{n}}} & {\DB+\Coll{\Sigma_{n+1}}} & {\DB+\Inf+\Fnd{\Pi_{n}}} & {\DB+\Fnd{\Pi_{n}}} & {\DB+\Repl{\Sigma_{n+1}}} \\
	{\ReSzero+\Pow+\Fnd{\Sigma_{n}}} & {\DB+\Fnd{\Sigma_{n}}} & {\ReSzero+\Pow+\Coll{\Sigma_{n+1}}} & {\DB+\SColl{\Sigma_n}} & {\DB+\Sep{\Sigma_n}} & {\ReSzero+\Pow+\Fnd{\Pi_{n}}} \\
	& {\ReSzero+\Inf+\Fnd{\Sigma_{n}}} & {\ReSzero+\Coll{\Sigma_{n+1}}} & {\ReSzero+\Inf+\Fnd{\Pi_{n}}} & {\ReSzero+\Fnd{\Pi_{n}}} & {\ReSzero+\Repl{\Sigma_{n+1}}} \\
	& {\ReSzero+\Fnd{\Sigma_{n}}} && {\ReSzero+\SColl{\Sigma_n}} & {\ReSzero+\Sep{\Sigma_n}}
	\arrow[from=1-3, to=2-2]
	\arrow[from=1-3, to=2-4]
	\arrow[from=2-2, to=3-1]
	\arrow[from=2-2, to=3-2]
	\arrow[from=2-2, to=3-3]
	\arrow[from=2-4, to=3-3]
	\arrow[from=2-4, to=3-4]
	\arrow[from=2-4, to=3-6]
	\arrow[from=3-1, to=5-1]
	\arrow[from=3-1, to=5-2]
	\arrow[from=3-2, to=4-3]
	\arrow[bend left=24, from=3-2, to=5-2]
	\arrow[from=3-3, to=4-3]
	\arrow[bend left=24, from=3-3, to=5-3]
	\arrow[from=3-4, to=4-3]
	\arrow[from=3-4, to=4-5]
	\arrow[from=3-6, to=4-5]
	\arrow[bend left=24, from=3-6, to=5-6]
	\arrow[from=4-2, to=5-2]
	\arrow[bend right=24, from=4-2, to=6-2]
	\arrow[from=4-3, to=5-4]
	\arrow[bend right=24, from=4-3, to=6-3]
	\arrow[bend right=24, from=4-5, to=6-5]
	\arrow[from=4-6, to=5-4]
	\arrow[bend right=24, from=4-6, to=6-6]
	\arrow[from=5-1, to=7-1]
	\arrow[from=5-1, to=7-2]
	\arrow[bend left=24, from=5-2, to=7-2]
	\arrow[from=5-3, to=6-3]
	\arrow[bend left=24, from=5-3, to=7-3]
	\arrow[from=5-4, to=5-5]
	\arrow[bend left=24, from=5-4, to=7-4]
	\arrow[bend left=24, from=5-5, to=7-5]
	\arrow[from=5-6, to=6-5]
	\arrow[bend left=24, from=5-6, to=7-6]
	\arrow[from=6-2, to=7-2]
	\arrow[bend right=24, from=6-2, to=8-2]
	\arrow[from=6-3, to=7-4]
	\arrow[bend right=24, from=6-3, to=8-3]
	\arrow[from=6-4, to=6-5]
	\arrow[bend right=24, from=6-4, to=8-4]
	\arrow[bend right=24, from=6-5, to=8-5]
	\arrow[from=6-6, to=7-4]
	\arrow[bend right=24, from=6-6, to=8-6]
	\arrow[from=7-1, to=9-2]
	\arrow[bend left=24, from=7-2, to=9-2]
	\arrow[from=7-3, to=8-3]
	\arrow[from=7-4, to=7-5]
	\arrow[bend left=24, from=7-4, to=9-4]
	\arrow[bend left=24, from=7-5, to=9-5]
	\arrow[from=7-6, to=8-5]
	\arrow[from=8-2, to=9-2]
	\arrow[from=8-3, to=9-4]
	\arrow[from=8-4, to=8-5]
	\arrow[from=8-6, to=9-4]
	\arrow[from=9-4, to=9-5]
\end{tikzcd}
\end{adjustbox}
\caption{Set theories mutually interpretable with \(\IS{n}\) for \(n\geq1\)}
\end{figure}
\vspace*{\fill}
\end{landscape}

These diagrams may also be extended to higher levels. One direction is to study systematically stronger set theories with Infinity but without Power Set, and to compare them with second-order arithmetic. Another is to adjust first-order arithmetical theories so as to suit ordinal arithmetic, and thereby to find arithmetical languages appropriate for comparison with the corresponding set theories; such systems should also be mutually interpretable with second-order arithmetic. Set theories possessing both Infinity and Power Set can no longer be called weak, and raise a third family of questions: how their ordinary interpretability degrees vary with Separation, Collection, Replacement, and Foundation, and at what point they begin to approach the usual theory \(\mathsf{ZFC}\). If suitable second-order theories of ordinal arithmetic and their variants are considered, they should also lie in this higher region.

\bibliographystyle{plainnat}
\bibliography{ref}

\end{document}